\newtheorem{theorem}{Theorem}[section]
\newtheorem{lemma}[theorem]{Lemma}
\newtheorem{corollary}[theorem]{Corollary}
\newtheorem{example}[theorem]{Example}
\renewcommand{\emptyset}{\varnothing}
\renewcommand{\leq}{\leqslant}
\renewcommand{\ge}{\geqslant}
\renewcommand{\le}{\leqslant}
\def\F{\mathcal{F}}
\def\Z{\mathbb{Z}}
\def\eref#1{$(\ref{#1})$}
\def\sref#1{\S$\ref{#1}$}
\def\lref#1{Lemma~$\ref{#1}$}
\def\tref#1{Theorem~$\ref{#1}$}
\def\cref#1{Conjecture~$\ref{#1}$}
\def\Cref#1{Corollary~$\ref{#1}$}
\begin{document}

\title{Mutually orthogonal binary\\ frequency squares of mixed type}

\author{Carly Bodkin and Ian M. Wanless\\
  \small School of Mathematics\\[-0.75ex]
  \small Monash University\\[-0.75ex]
  \small Clayton Vic 3800 Australia\\
\small \texttt{\{carly.bodkin,ian.wanless\}\ @monash.edu}}

\date{}

\maketitle

\abstract{A \emph{frequency square} is a matrix in which each row and
  column is a permutation of the same multiset of symbols. Two
  frequency squares $F_1$ and $F_2$ with symbol multisets $M_1$ and
  $M_2$ are \emph{orthogonal} if the multiset of pairs obtained by
  superimposing $F_1$ and $F_2$ is $M_1\times M_2$. A set of MOFS is a
  set of frequency squares in which each pair is orthogonal. We first
  generalise the classical bound on the cardinality of a set of MOFS
  to cover the case of \emph{mixed type}, meaning that the symbol
  multisets are allowed to vary between the squares in the set.

  A frequency square is \emph{binary} if it only uses the symbols 0
  and 1.  We say that a set $\F$ of MOFS is \emph{type-maximal} if it
  cannot be extended to a larger set of MOFS by adding a square whose
  symbol multiset matches that of at least one square already in $\F$.
  Building on pioneering work by Stinson, several recent papers have
  found conditions that are sufficient to show that a set of binary
  MOFS is type-maximal. We generalise these papers in several
  directions, finding new conditions that imply type-maximality. Our
  results cover sets of binary frequency squares of mixed type. Also,
  where previous papers used parity arguments, we show the merit of
  arguments that use moduli greater than 2.  }

\section{Introduction}

A \emph{frequency square} of {\it type} $(n; \lambda_0,\lambda_1,\dots, \lambda_{m-1})$ is an $n \times n$ array with entries from the set $\{0,1,\dots,m-1\}$, where entry $i$ occurs $\lambda_i$ times in every row and $\lambda_i$ times in every column. We say that symbol $i$ has \emph{frequency} $\lambda_i$ and note that $n=\sum_{i=0}^{m-1} \lambda_i$. Let $F_1$ and $F_2$ be frequency squares of type $(n; \lambda_0,\lambda_1,\dots, \lambda_{m_1-1})$ and $(n; \mu_0,\mu_1,\dots, \mu_{m_2-1})$ respectively. Then $F_1$ and $F_2$ are {\it mutually orthogonal} if, when superimposed, each of the $m_1m_2$ possible ordered pairs $(i,j)$ with $0\leq i \leq m_1-1$ and $0 \leq j \leq m_2-1$, occurs exactly $\lambda_i \mu_j$ times. A set of frequency squares is said to be {\it mutually orthogonal} if every two distinct members of the set are orthogonal. A set of $k$ mutually orthogonal frequency squares (MOFS) of order $n$ will be referred to as a set of $k$-MOFS$(n)$, or simply a set of $k$-MOFS. If we do not require that every square in a set of MOFS has the same type then we say that the MOFS have \emph{mixed type}. Since the unique frequency square of type $(n;n)$ is trivially orthogonal to every other frequency square, we will assume all our frequency squares have at least two symbols.

A set $\{F_1,F_2,\dots,F_k\}$ of MOFS is said to be \emph{maximal} if there does not exist a frequency square $F$ that is orthogonal to $F_i$ for $1 \leq i \leq k$. Note that this definition of maximal does not require $F$ to be of any particular type. We also define a more restricted version of maximality. 
A set $\{F_1,F_2,\dots,F_k\}$ of MOFS is said to be \emph{type-maximal} if there does not exist a frequency square $F$ of type $(n; \lambda_0,\lambda_1,\dots, \lambda_{m-1})$ such that
\begin{itemize}
\item $F$ is orthogonal to $F_i$ for each $1 \leq i \leq k$, and
\item there is some $t \in \{1,2,\dots,k\}$ such that $F_t$ is also of type $(n; \lambda_0,\lambda_1,\dots, \lambda_{m-1})$.
\end{itemize}
A set of MOFS that is maximal is necessarily type-maximal. The converse statement fails, as will be shown by Example \ref{not-max} in \sref{rel-max}.  To date, research into the maximality of MOFS has focused primarily on type-maximality (see \cite{MOFS-2019,MOFS-2020,JJ-2020}). However, those papers do not consider MOFS of mixed type, and they use ``maximal'' in place of what we are calling ``type-maximal''.

We mainly focus on sets of \emph{binary} MOFS, where each of the frequency squares has two symbols (but may be of mixed type, unless specified). Sometimes we will require all frequency squares in a set to be of the same type, and in this case we use the notation $k$-MOFS$(n; \lambda_0,\lambda_1,\dots, \lambda_{m-1})$ to denote a set of $k$-MOFS in which each frequency square is of type $(n; \lambda_0,\lambda_1,\dots, \lambda_{m-1})$. Note that applying a surjection onto $\{0,1\}$ (or, indeed, any other function) to the symbols of a frequency square does not alter orthogonality between that square and any other frequency square. For this reason, a set of MOFS is maximal if and only if it cannot be extended by a binary frequency square. 

A binary $k$-MOFS$(n;n-1,1)$ is equivalent to an {\it equidistant permutation array} (EPA). An EPA $A(n,d;k)$ is a $k \times n$ array in which each row contains each integer from $1$ to $n$ precisely once, and every pair of rows differ in precisely $d$ positions. Now, each square of a $k$-MOFS$(n;n-1,1)$ is a permutation matrix. Taking the corresponding permutations as the rows of a $k \times n$ array yields an $A(n,n-1;k)$. Hence, our results in \sref{comp} regarding maximal $k$-MOFS$(n;n-1,1)$ duplicate results already published in the literature on EPAs (see \cite{HCD}).

The structure of this paper is as follows. In \sref{s:compsets} we generalise the classical upper bound on the cardinality of a set of MOFS, to the case of mixed type. In \sref{Relations} we introduce the idea of relations, which originated in a seminal paper by Stinson \cite{LS}. We find conditions that must be satisfied by an arbitrary (that is, not necessarily mutually orthogonal) set of frequency squares that satisfy a relation. In \sref{s:relbinMOFS} we specialise to study relations on binary MOFS. Our results in this section generalise theorems from \cite{MOFS-2019} and \cite{JJ-2020} to the case of MOFS of mixed type. In \sref{rel-max} we establish several conditions that can be used to show type-maximality. In fact our conditions usually demonstrate that a set of MOFS cannot be extended by adding a new frequency square unless every symbol has even multiplicity in that square. Finally, in \sref{comp} we look at some data from orders $n\le6$. We consider how large a set of MOFS can be if we allow its squares to have certain types. This data provides several examples that illustrate the results in earlier sections. In some cases those examples where the inspiration for the results themselves.

\section{Complete sets of MOFS}\label{s:compsets}

Most research into sets of MOFS has focused on squares of type $(n; \lambda_0,\lambda_1,\dots, \lambda_{m-1})$ where $\lambda_0=\lambda_1=\dots=\lambda_{m-1}=\lambda$. We will refer to these as MOFS of type $(n;\lambda)$ and describe them as \emph{balanced}. We typically do \emph{not} assume that our frequency squares are balanced. Indeed, we do not even assume that each frequency square in a set has the same type.

It is well known that in the balanced case, the maximum possible cardinality of a set of MOFS is $(n-1)^2/(m-1)$. This was recently re-proved by Cavenagh, Mammoliti and Wanless \cite{MOFS-2020}, and we generalise their proof to include sets of MOFS of mixed type. 

\begin{theorem}\label{gen-bound}
Let $\F=\{F_1,F_2,\dots,F_k\}$ be a set of {\rm MOFS}$(n)$ and let $F_t$ have symbol set $\{0,1,\dots,m_t-1\}$ for $1\leq t \leq k$. Then we have
\begin{equation}\label{upper-bd}
\sum_{i=1}^k (m_i-1) \leq (n-1)^2.
\end{equation}
\end{theorem}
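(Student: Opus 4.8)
The plan is to mimic the linear-algebra proof of the classical MOFS bound, working over $\mathbb{Q}$ (or $\mathbb{R}$). For each square $F_t$ with symbol set $\{0,1,\dots,m_t-1\}$ and frequencies $\lambda_0^{(t)},\dots,\lambda_{m_t-1}^{(t)}$, and for each symbol $s$ with $1\le s\le m_t-1$, I would form the $n\times n$ real matrix $A_{t,s}$ defined entrywise by $(A_{t,s})_{xy}=1$ if $F_t$ has symbol $s$ in cell $(x,y)$ and $(A_{t,s})_{xy}=-\lambda_s^{(t)}/(n-\lambda_s^{(t)})$ (or some similar normalisation) otherwise; equivalently, take the 0/1 indicator matrix of symbol $s$ in $F_t$ and subtract $\lambda_s^{(t)}/n$ times the all-ones matrix $J$. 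Call this centred indicator $B_{t,s}$. The row and column sums of $B_{t,s}$ are all zero, so each $B_{t,s}$ lies in the $(n-1)^2$-dimensional space $W$ of $n\times n$ real matrices whose row sums and column sums all vanish. The total number of these matrices is $\sum_{t=1}^k(m_t-1)$, so it suffices to prove that the collection $\{B_{t,s}\}$ is linearly independent in $W$; the bound \eref{upper-bd} follows immediately from $\dim W=(n-1)^2$.

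To prove linear independence I would use the Frobenius inner product $\langle X,Y\rangle=\sum_{x,y}X_{xy}Y_{xy}=\operatorname{tr}(X^\top Y)$ and show that the Gram matrix of the $B_{t,s}$ is positive definite, or at least that the only linear dependence is trivial. The key computation is $\langle B_{t,s},B_{t',s'}\rangle$. When $t=t'$: since the indicator matrices for distinct symbols of one square are disjoint, and each symbol-$s$ indicator has exactly $\lambda_s^{(t)}$ ones per row and column, one gets $\langle B_{t,s},B_{t,s'}\rangle = n\lambda_s^{(t)}\delta_{ss'} - n\lambda_s^{(t)}\lambda_{s'}^{(t)}/n\cdot(\text{something})$ — the upshot being that within a single square the Gram block is diagonal-dominant / positive definite, exactly the rank-$(m_t-1)$ statement for one frequency square. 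When $t\ne t'$: here orthogonality enters. The pair $(s,s')$ appears exactly $\lambda_s^{(t)}\lambda_{s'}^{(t')}$ times when $F_t,F_{t'}$ are superimposed, so $\sum_{x,y}(\text{ind}_{t,s})_{xy}(\text{ind}_{t',s'})_{xy}=\lambda_s^{(t)}\lambda_{s'}^{(t')}$, and after the centring corrections (using that each indicator has constant row/column sums, so its inner product with $J$ is $n\lambda$) everything cancels to give $\langle B_{t,s},B_{t',s'}\rangle=0$. Thus the Gram matrix is block-diagonal across the squares, with each block positive definite, hence the whole system is linearly independent.

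The main obstacle I anticipate is bookkeeping the normalisation so that the intra-square blocks come out genuinely positive definite rather than merely positive semidefinite. One clean way around this: instead of subtracting $\lambda_s^{(t)}/n\,J$ from every symbol's indicator, drop symbol $0$ entirely and only form $B_{t,s}$ for $1\le s\le m_t-1$; then within square $t$ the matrices $\{\text{ind}_{t,s}-\lambda_s^{(t)}n^{-1}J : 1\le s\le m_t-1\}$ are independent because the full set of $m_t$ centred indicators sums to $0$ (that is their \emph{only} relation, since the raw indicators partition $J$ and have rank $m_t$ before centring, rank $m_t-1$ after). So the intra-square block, being the Gram matrix of an independent set, is automatically positive definite, and the inter-square blocks vanish by orthogonality as above. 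Assembling these facts gives a single positive-definite Gram matrix of size $\sum(m_t-1)$ sitting inside $W$, which forces $\sum_{t=1}^k(m_t-1)\le\dim W=(n-1)^2$. A secondary, more minor point is to double-check the edge case where some $F_t$ is balanced versus highly unbalanced — but since the argument never uses $\lambda_s^{(t)}=\lambda$ anywhere, it should go through verbatim, which is precisely the generalisation over \cite{MOFS-2020} that the theorem claims.
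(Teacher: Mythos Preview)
Your proposal is correct and is essentially the same proof as the paper's. The paper works in the full $n^2$-dimensional matrix space and shows that $\{J_n\}\cup\{\mathscr{R}_r\}\cup\{\mathscr{C}_c\}\cup\{\mathscr{I}_{s,t}\}$ is linearly independent via the Frobenius inner product, using centred versions $\mathscr{I}'_{s,t}=(n/\lambda_{s,t})\mathscr{I}_{s,t}-J_n$ that are (up to scaling) exactly your $B_{t,s}$; your choice to work directly inside the $(n-1)^2$-dimensional space $W$ of doubly-centred matrices is just the orthogonal-complement reformulation of the same computation.
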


\begin{proof}
Suppose that $F_t$ has type $(n; \lambda_{0,t},\lambda_{1,t},\dots, \lambda_{m_t-1,t})$ for $1\leq t \leq k$. We consider $n \times n$ matrices as vectors in an $n^2$-dimensional vector space equipped with the inner product $A \circ B= \sum_i \sum_j a_{ij}b_{ij}$ for matrices $A=[a_{ij}]$ and $B=[b_{ij}]$. We define the following matrices for $1\leq r,c \leq n$, $0 \leq s < m_t$ and $1\leq t \leq k$
\begin{align*}
	\mathscr{R}_r[i,j]& =
	\begin{cases}
		1 & \text{if } i=r \text{ and} \\
		0 & \text{otherwise,}
	\end{cases} \\
	\mathscr{C}_c[i,j]&=
	\begin{cases}
		1 & \text{if } j=c \text{ and} \\
		0 & \text{otherwise,}
	\end{cases} \\
	\mathscr{I}_{s,t}[i,j]&=
	\begin{cases} 
		1 & \text{if } F_t[i,j]=s \text{ and} \\
		0 & \text{otherwise.}
	\end{cases} 
\end{align*}

We claim that the following is a linearly independent set, where $J_n$ is the all ones matrix of order $n$:
\begin{equation} \label{ind-set}
\{J_n\} \cup \{\mathscr{R}_r : 1 \leq r \leq n-1\} \cup \{\mathscr{C}_c : 1 \leq c \leq n-1\} \cup \{\mathscr{I}_{s,t}: 1 \leq s < m_t, 1\leq t \leq k\}.
\end{equation} 

This set has cardinality $1+2(n-1)+\sum_{i=1}^k (m_i-1)$ and as we are in an $n^2$-dimensional vector space, the result will follow from our claim.
For each $r,c,s$ and $t$ we define the new matrices $\mathscr{R}'_r=n\mathscr{R}_r-J_n$ and $\mathscr{C}'_c=n\mathscr{C}_c-J_n$ and 
$\mathscr{I}'_{s,t}=(n/\lambda_{s,t})\mathscr{I}_{s,t}-J_n$. It is trivial that the union of $\{J_n\}$ with either $\{\mathscr{R}_r : 1 \leq r \leq n-1\}$ or $\{\mathscr{C}_c :1 \leq c \leq n-1\}$ yields a linearly independent set. Furthermore, for any given $t$, the set $\{J_n\} \cup \{\mathscr{I}_{s,t} : 1 \leq s < m_t, 1\leq t \leq k\}$ is also linearly independent. It is straightforward to show the following holds
$$\mathscr{R}'_r \circ \mathscr{C}'_c = \mathscr{R}'_r \circ \mathscr{I}'_{s,t}=\mathscr{C}'_c \circ \mathscr{I}'_{s,t}= \mathscr{I}'_{s,t} \circ \mathscr{I}'_{s',t'}=0,$$
for all $r,c,s,t,s',t'$ provided $t\neq t'$. Hence (\ref{ind-set}) is a linearly independent set and we are done.
\end{proof}

 A set of MOFS that achieves (\ref{upper-bd}) will be referred to as {\it complete}. Note that a complete set of MOFS is trivially maximal. If we consider the case where all frequency squares in a set of MOFS have the same number of symbols (but can still be of any type), we see immediately that the maximum number is the same as for MOFS of type $(n; \lambda)$.
\begin{corollary}\label{cor-gen-bound}
Let $\F$ be a set of $k$-{\rm MOFS}$(n)$. If each of the frequency squares in $\F$ has precisely $m$ symbols then we have
$$k \leq \dfrac{(n-1)^2}{(m-1)}.$$
\end{corollary}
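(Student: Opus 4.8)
The plan is to derive Corollary~\ref{cor-gen-bound} as an immediate consequence of Theorem~\ref{gen-bound}. Under the hypothesis of the corollary, every square $F_i \in \F$ has exactly $m$ symbols, so $m_i = m$ for each $i \in \{1,2,\dots,k\}$. Substituting this into the left-hand side of \eref{upper-bd} turns the sum $\sum_{i=1}^k (m_i - 1)$ into $k(m-1)$.

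Thus Theorem~\ref{gen-bound} yields $k(m-1) \leq (n-1)^2$. Since we have assumed throughout that each frequency square has at least two symbols, we have $m \geq 2$, so $m - 1 \geq 1 > 0$, and we may divide both sides by $m-1$ to obtain
\begin{equation*}
k \leq \frac{(n-1)^2}{m-1},
\end{equation*}
which is exactly the claimed bound. This completes the proof.

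There is essentially no obstacle here: the corollary is a direct specialisation of the theorem, and the only thing to be careful about is that the division by $m-1$ is legitimate, which is guaranteed by the standing assumption that every frequency square uses at least two symbols. If one wanted to be fully self-contained one could also remark that the bound is attained by balanced complete sets of MOFS of type $(n;\lambda)$ with $n = m\lambda$, matching the classical value $(n-1)^2/(m-1)$, but this observation is not needed for the proof of the inequality itself.
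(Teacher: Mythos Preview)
Your proof is correct and matches the paper's approach exactly: the paper presents this corollary without a separate proof, noting only that it follows ``immediately'' from \tref{gen-bound} by specialising to $m_i=m$ for all $i$. Your explicit justification that $m-1>0$ (from the standing assumption that every square has at least two symbols) is a sensible detail to include.
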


In \cite{MOFS-2019} it was shown that there are no complete sets of type $(n;n/2)$ when $n\equiv2\bmod4$. However, that result does not rule out complete sets of unbalanced frequency squares or sets of mixed type.

\section{Relations on arbitrary sets of frequency squares}\label{Relations}

Relations were first used in \cite{LS} to demonstrate the maximality of sets of mutually orthogonal Latin squares. The idea of relations in the context of other frequency squares was used extensively throughout \cite{MOFS-2019} and \cite{JJ-2020}. We generalise those works in several ways. In this section, we look at sets of frequency squares which are not necessarily mutually orthogonal. We also allow the squares to have different frequencies (i.e.~to be of mixed type).

A set $\F=\{F_1,F_2, \dots, F_k\}$ of frequency squares of order $n$ and of mixed type, may be written as an $n^2 \times (k+2)$ array, $\mathcal{O}$ with rows given by
\begin{equation}\label{array-row}
\big[i,j,F_1[i,j],F_2[i,j], \dots, F_k[i,j] \big],
\end{equation}
for $1\leq i,j \leq n$. We will refer to $\mathcal{O}$ as the \emph{array corresponding to $\F$}. In order for the columns of $\mathcal{O}$ to be well-defined, we should consider $\F$ to have an indexing that implies an ordering on the frequency squares. Let $Y_c$ be the set of symbols that occur in column $c$ of $\mathcal{O}$. Then a \emph{relation} is a $(k+2)$-tuple $\mathcal{R}=(X_1,X_2, \dots, X_{k+2})$ of sets such that $X_i \subseteq Y_i$ for $1 \leq i \leq k+2$, with the property that every row (\ref{array-row}) of $\mathcal{O}$ has an even number of columns $c$ for which the symbol in column $c$ is a element of $X_c$. In other words, $\big|\{c : \mathcal{O}[r,c] \in X_c, 1 \leq c \leq k+2\}\big|\equiv0\bmod2$ for every $r \in \{1, \dots, n^2\}$. We say that $\F$ \emph{satisfies} the relation $\mathcal{R}$.

A relation is \emph{trivial on column c} if $X_c = \emptyset$ or $X_c=Y_c$. We will say that a relation is \emph{non-trivial} if it is not trivial on at least one column. Furthermore, we say a relation is \emph{full} if it is non-trivial on every column except possibly the first two. A relation for which $\{|X_1|,|X_2|\}\subseteq\{0,n\}$ is called {\it constant}. We are most often interested in relations that are not constant, which we will refer to as \emph{non-constant}.

An important observation about relations is the following: if we start with a relation $(X_1, X_2 \dots, X_{k+2})$ then we may swap any pair of distinct elements $(X_i,X_j)$ with their complements $(X_i^{\mathsf{c}},X_j^{\mathsf{c}})$ and the resultant $(k+2)$-tuple is still a relation. This observation has a particularly useful implication for binary frequency squares. Suppose that a set of binary frequency squares satisfies a relation $(X_1, X_2 \dots, X_{k+2})$. Then $X_c \subseteq \{0,1\}$ for $3 \leq c \leq k+2$. By complementing the pair $(X_1,X_c)$ as necessary, we may assume $X_c \subseteq \{1\}$ for $3 \leq c \leq k+2$. This makes relations in the binary case easy to grasp, as is illustrated by the following elegant characterisation from \cite{MOFS-2019} and \cite{JJ-2020}. We state and prove it in a slightly more general setting.

\begin{lemma}\label{lem-MOFS-2019}
A set $\F$ of binary frequency squares satisfies a non-trivial relation if and only if some non-empty subset of $\F$ has a $\Z_2$-sum that, up to permutation of the rows and columns, has the following structure of constant blocks.
\begin{equation}\label{block-matrix}
 \left[ \begin{array}{cc}
\bm{0} & \bm{1} \\
\bm{1} & \bm{0}
\end{array}\right].
\end{equation}
\end{lemma}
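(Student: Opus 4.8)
The plan is to prove the two directions separately, exploiting the normalisation already described in the excerpt: for a set of binary frequency squares satisfying a non-trivial relation, we may complement pairs $(X_1,X_c)$ so that $X_c\subseteq\{1\}$ for all $3\le c\le k+2$, and (if necessary) $X_1\subseteq\{1\}$, $X_2\subseteq\{1\}$. Since the relation is non-trivial, at least one $X_c$ is a proper nonempty subset of its $Y_c$; I will argue that we may take this to be one of the columns $c\ge3$ (if the only non-triviality were in columns $1$ or $2$, the evenness condition would force a contradiction on row or column counts, because each symbol $i\in\{1,\dots,n\}$ appears exactly $n$ times in column $1$ and the parity of the count would not be forced to $0$ consistently — I will spell this out). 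Let $S=\{t : X_{t+2}=\{1\},\ 1\le t\le k\}$ be the (nonempty) set of squares that are ``active'' in the relation, and form $G=\sum_{t\in S}F_t$ over $\Z_2$.

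First I would prove the ``only if'' direction. For a cell $(i,j)$, the number of active columns containing the distinguished symbol is $(\text{row contribution}) + \sum_{t\in S}\mathbf 1[F_t[i,j]=1]$, where the row contribution is $\mathbf 1[i\in X_1]+\mathbf 1[j\in X_2]$ after the normalisation (here $X_1,X_2$ are either empty or a subset of $\{1,\dots,n\}$, but actually I should keep them as whatever rows/columns they distinguish). The relation says this total is even, hence $\sum_{t\in S}F_t[i,j]\equiv \mathbf 1[i\in X_1]+\mathbf 1[j\in X_2]\pmod 2$. Thus $G[i,j]$ depends only on whether $i\in X_1$ and whether $j\in X_2$: writing $R=X_1$ for the set of ``special rows'' and $C=X_2$ for the ``special columns'', we get $G[i,j]=1$ exactly when exactly one of $i\in R$, $j\in C$ holds. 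Permuting rows so that those in $R$ come first and columns so that those in $C$ come first, $G$ has precisely the block form \eref{block-matrix}, with the off-diagonal blocks all $\bm 1$ and the diagonal blocks all $\bm 0$. I must also check the blocks are genuinely present (i.e.\ $R$ and $C$ are each nonempty and proper): if $R\in\{\emptyset,\{1,\dots,n\}\}$ and likewise $C$, then $G$ is constant, and then some active $F_t$ — in fact the $\Z_2$-sum over $S$ — is a constant matrix, which is impossible for a frequency square with two symbols unless the sum telescopes; here I will invoke that a nonempty $\Z_2$-sum of distinct binary frequency squares cannot be all-$\bm0$ or all-$\bm1$, which follows because each $F_t$ has the symbol $1$ with frequency $\lambda_{1,t}$ strictly between $0$ and $n$ in every row, so looking at a single row the sum of the rows cannot be the constant vector — actually this needs a short argument and is the one genuinely delicate point, discussed below.

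For the ``if'' direction I would simply reverse the computation. Suppose a nonempty $\F'\subseteq\F$ has $G=\sum_{F\in\F'}F$ equal, after row/column permutation, to the block matrix \eref{block-matrix} with nonempty off-diagonal blocks. Undo the permutations to recover sets $R\subsetneq\{1,\dots,n\}$ and $C\subsetneq\{1,\dots,n\}$, both nonempty, with $G[i,j]=1$ iff exactly one of $i\in R$, $j\in C$. Define $X_1=R$, $X_2=C$, $X_{t+2}=\{1\}$ for $F_t\in\F'$, and $X_c=\emptyset$ for the remaining columns. Then for every cell the number of columns hitting the distinguished symbol is $\mathbf 1[i\in R]+\mathbf 1[j\in C]+G[i,j]$, which is $2\cdot\mathbf 1[i\in R\cap(\text{stuff})]$ — more precisely it equals $2\mathbf 1[i\in R]$ when $j\in C$ and $2\mathbf 1[i\in R]$... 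I will just observe $a+b+(a\oplus b)=2\max(a,b)\equiv0\pmod2$ for $a,b\in\{0,1\}$, so the count is even and $\mathcal R=(X_1,\dots,X_{k+2})$ is a relation. It is non-trivial because $R$ is a proper nonempty subset of $Y_1=\{1,\dots,n\}$ (alternatively because $X_{t+2}=\{1\}\ne\emptyset,\{0,1\}$ for $F_t\in\F'$ whenever that $F_t$ actually uses both symbols, which it does by our standing assumption that every frequency square has at least two symbols).

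The main obstacle I anticipate is the edge-case bookkeeping rather than any deep idea: namely, matching ``non-trivial'' precisely with ``the block form genuinely has all four blocks present'', and in particular ruling out a degenerate relation that is non-trivial only in columns $1$ or $2$. The clean way to handle this is to note that non-triviality in column $1$ alone (say $\emptyset\ne X_1\subsetneq\{1,\dots,n\}$, all other $X_c$ trivial) cannot give a relation: pick $i\in X_1$ and any $j$; the count is $1$ if every other $X_c\in\{c\ge2\}$ is empty, or $n$-ish if some $X_c=Y_c$, and in either case one can choose a cell making the parity odd — I will phrase this as: a relation that is trivial on every column $c\ge3$ forces $X_1$ and $X_2$ to be constant, by comparing two cells in the same row, then two in the same column. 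Hence any non-trivial relation is, after normalisation, non-trivial on some column $c\ge3$, so $\F'$ is nonempty, and conversely the constructed relation is non-trivial. With that lemma in hand, the equivalence is exactly the identity $a\oplus b = a+b \bmod 2$ applied cellwise, and the proof is short.
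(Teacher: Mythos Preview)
Your overall approach matches the paper's: normalise so that $X_c\subseteq\{1\}$ for $c\ge3$, let $S$ be the set of indices with $X_{t+2}=\{1\}$, and observe that the relation forces $\sum_{t\in S}F_t[i,j]\equiv\mathbf 1[i\in X_1]+\mathbf 1[j\in X_2]\pmod2$, giving the block form; conversely, from the block form build the relation $(X_1,X_2,\{1\},\dots)$. Your extra care in verifying that a non-trivial relation must be non-trivial on some column $c\ge3$ (so that $S\ne\emptyset$) is correct and slightly more explicit than the paper, which simply asserts the equivalence ``non-trivial relation $\Leftrightarrow$ some non-empty subset satisfies a full relation''.

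However, there is a genuine error. You assume you must show that $R=X_1$ and $C=X_2$ are each proper and nonempty, i.e.\ that all four blocks in \eref{block-matrix} are present. This is a misreading: the paper explicitly allows degenerate blocks (see the paragraph immediately after Example~\ref{EJCgoof}). In particular, your claimed lemma that ``a nonempty $\Z_2$-sum of distinct binary frequency squares cannot be all-$\bm0$ or all-$\bm1$'' is false. Example~\ref{EJCgoof} exhibits four binary MOFS of type $(8;4,4)$ whose $\Z_2$-sum is the zero matrix; this set satisfies a non-trivial constant full $(8,8)$-relation, and the corresponding block decomposition has two empty blocks. Your proposed proof of that claim (``each $F_t$ has the symbol $1$ with frequency strictly between $0$ and $n$ in every row, so looking at a single row the sum cannot be the constant vector'') fails because nothing prevents an even number of ones from summing to $0\bmod2$ in each cell.

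The fix is simply to delete that step. Once you drop the requirement that $R$ and $C$ be proper nonempty, your argument goes through unchanged: in the forward direction the block form (possibly degenerate) follows directly; in the reverse direction non-triviality of the constructed relation is witnessed by any column $c=t+2$ with $F_t\in\F'$, since $X_c=\{1\}\notin\{\emptyset,\{0,1\}\}$, exactly as you note parenthetically at the end of your ``if'' paragraph.
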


\begin{proof}
Let $\F=\{F_1,F_2,\dots,F_k\}$ be a set of binary frequency squares. Firstly, suppose that $\F$ satisfies a full relation $\mathcal{R}=(X_1,X_2,\dots,X_{k+2})$. For $r,c \in \{1,2,\dots,n\}$, let $x_{r,c}$ denote the $\Z_2$-sum over the entries in the cell $(r,c)$ of each of the squares in $\F$. That is, $x_{r,c} \equiv\sum_{t=1}^k F_t[r,c] \bmod 2$. By the definition of a relation, $x_{r,c}=1$ if precisely one of $r \in X_1$ or $c \in X_2$ holds, and $x_{r,c}=0$ otherwise. Therefore, by permuting the rows and columns of $\F$ such that $X_1=\{1,2,\dots,|X_1|\}$ and $X_2=\{1,2,\dots,|X_2|\}$, the $\Z_2$-sum of $\F$ will have the structure in (\ref{block-matrix}).

Secondly, suppose the $\Z_2$-sum of $\F$ has the structure in (\ref{block-matrix}), up to permutation of the rows and columns. Let $X_1$ and $X_2$ contain, respectively, the rows and the columns that induce the upper left block of zeros in (\ref{block-matrix}). Then it is easily checked that $\F$ satisfies the full relation $(X_1,X_2,\dots,X_{k+2})$ where $X_c=\{1\}$ for $3\leq c \leq k+2$.

We have shown that a set $\F$ of binary frequency squares satisfies a full relation if and only if the $\Z_2$-sum of $\F$ has the structure in (\ref{block-matrix}), up to permutation of rows and columns.
Lastly, it is clear that a set of frequency squares satisfies a non-trivial relation if and only if it contains a non-empty subset that satisfies a full relation. This completes the proof.
\end{proof}

Let $\mathcal{R}=(X_1,X_2,\dots,X_{k+2})$ be a relation with $|X_1|=a$ and $|X_2|=b$ for some $a,b \in \{0,\dots,n\}$. The block structure in (\ref{block-matrix}) demonstrates that when considering relations in the binary case, we only really care about the number of rows involved (the cardinality of $X_1$) and the number of columns involved (the cardinality of $X_2$). Therefore, for simplicity, we will refer to $\mathcal{R}$ as an $(a,b)$-relation. \lref{lem-MOFS-2019} implies that a set $\F$ of binary frequency squares satisfies a constant relation if and only if some non-empty subset of $\F$ has $\Z_2$-sum that is either the all zeros or all ones matrix (see Examples \ref{EJCgoof} and \ref{constant-rel}). Since we may swap the pair $(X_1,X_2)$ for $(X_1^{\mathsf{c}},X_2^{\mathsf{c}})$ in the relation, an $(a,b)$-relation is considered equivalent to an $(n-a,n-b)$-relation. Hence, each constant relation is equivalent to either an $(n,n)$-relation or an $(n,0)$-relation.

It is important to stress that we allow some of the blocks in \eref{block-matrix} to be degenerate. This will certainly be the case when we have a constant relation, such as in the following example. In this and many subsequent examples, we will present our {\rm MOFS} in superimposed format.

\begin{example}\label{EJCgoof}
Consider the following set of binary $4$-{\rm MOFS}$(8;4,4):$
\[
\left[
\begin{array}{cccccccc}
1111&0000&0011&1100&1111&0101&0000&1010\\
0101&1010&0110&1001&0000&1010&1111&0101\\
0000&1111&1100&0011&0011&0110&1100&1001\\
1010&0101&1001&0110&1100&1001&0011&0110\\
1111&0101&0000&1010&1111&0000&0011&1100\\
0000&1010&1111&0101&0101&1010&0110&1001\\
0011&0110&1100&1001&0000&1111&1100&0011\\
1100&1001&0011&0110&1010&0101&1001&0110\\
\end{array}\right].
\]
These {\rm MOFS} satisfy a non-trivial constant full $(8,8)$-relation, since their $\Z_2$-sum is
the zero matrix. No proper subset of these {\rm MOFS} satisfies a relation.
\end{example}

We have made a subtle change to the definition of a full relation,
compared to the definition given in \cite{MOFS-2019}. The difference
is that we are allowing full relations to be trivial on both of the
first two columns, as happens in Example~\ref{EJCgoof}. Failure to do so
in \cite{MOFS-2019} led to an erroneous claim  that a set of MOFS
satisfies a non-trivial relation if and only if a non-empty subset
satisfies a full relation.  This claim is true with our definition of full
but not with the definition used in \cite{MOFS-2019}, with
Example~\ref{EJCgoof} providing a counterexample.

\begin{example}\label{ex1}
The following is an example of a set of $2$-{\rm MOFS}$(6;2,4)$ that satisfies a full $(4,4)$-relation:
\[ 
\left[\begin{array}{cccccc}
0&1&1&1&0&1\\
1&0&1&1&0&1\\
1&1&0&1&1&0\\
1&1&1&0&1&0\\
0&0&1&1&1&1\\
1&1&0&0&1&1\end{array} \right]
+
\left[ \begin{array}{cccccc}
0&1&1&1&1&0\\
1&0&1&1&1&0\\
1&1&0&1&0&1\\
1&1&1&0&0&1\\
1&1&0&0&1&1\\
0&0&1&1&1&1\end{array} \right]
\equiv
\left[ \begin{array}{cccccc}
0&0&0&0&1&1\\
0&0&0&0&1&1\\
0&0&0&0&1&1\\
0&0&0&0&1&1\\
1&1&1&1&0&0\\
1&1&1&1&0&0\end{array} \right].
\]
In particular $X_1=X_2=\{1,2,3,4\}$. We also give its $\Z_2$-sum. 
\end{example}

An interesting feature of Example~\ref{ex1} is that it shows a full relation being satisfied by an even number of binary {\rm MOFS} of order $2\bmod4$. Such a thing is impossible when the {\rm MOFS} are balanced, as shown by Theorem $5$ in {\rm\cite{MOFS-2019}} or Corollary 15 in \cite{JJ-2020}. This provides some motivation to see whether we can generalise results from \cite{MOFS-2019} and \cite{JJ-2020}, which were only concerned with balanced MOFS. The next result does just that.

\begin{theorem}\label{thm-rel-gen}
Let $\F$ be a set of $k$ frequency squares of order $n$ that satisfies a non-constant relation $(X_1,X_2,\dots,X_{k+2})$. Then we have 
\begin{itemize}
\item[(i)] $n$ is even, and 
\item[(ii)] $|X_1| \equiv |X_2| \bmod 2.$
\end{itemize}
\end{theorem}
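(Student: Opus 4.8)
The plan is to reduce to the binary case and then exploit the block structure from \lref{lem-MOFS-2019}. Since the relation is non-constant and non-trivial, by the observation following \lref{lem-MOFS-2019} we may pass to a non-empty subset $\F'$ of $\F$ satisfying a \emph{full} relation, and — after complementing $(X_1,X_c)$ pairs as necessary — assume $X_c\subseteq\{1\}$ for all data columns $c$. The key point is that orthogonality is never used: we only need that each square is a frequency square. Applying, for each square $F_t$, the surjection that sends the symbols in $X_{t+2}$ to $1$ and all others to $0$ turns $F_t$ into a binary frequency square of some type $(n;n-\ell_t,\ell_t)$ (where $\ell_t$ is the total frequency in $F_t$ of the symbols lying in $X_{t+2}$), and this operation preserves the relation. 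So without loss of generality we may assume every $F_t$ is binary, $X_c=\{1\}$ for $3\le c\le k+2$, and that $\F$ itself satisfies a full, non-constant relation; write $a=|X_1|$, $b=|X_2|$.

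Now \lref{lem-MOFS-2019} says the $\Z_2$-sum $S=\sum_t F_t$ has, after permuting rows and columns, the block form \eref{block-matrix} with the top-left all-zero block of size $a\times b$. The heart of the argument is a double count of the $1$s in $S$. On one hand, the block structure gives exactly $a(n-b)+(n-a)b$ entries equal to $1$. On the other hand, $S$ is a $\Z_2$-sum of matrices each of whose rows (resp.\ columns) has a fixed number of $1$s; in particular every row of $S$ has the \emph{same} parity of number of $1$s, namely $\sum_t \ell_t \bmod 2$, and likewise every column. From the block form, a row in $X_1$ has $n-b$ ones while a row outside $X_1$ has $b$ ones; equating parities forces $n-b\equiv b$, i.e.\ $n\equiv 2b$, and symmetrically $n\equiv 2a$. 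If $n$ were odd this would already be contradictory unless $a\equiv b\equiv n/2$, which is not an integer — so first $n$ must be even, giving (i); and then $2a\equiv 2b\pmod n$ combined with $n$ even yields $a\equiv b\pmod{2}$, which is (ii) for the subset $\F'$. (One must finally note that complementing the pair $(X_1,X_2)$ replaces $(a,b)$ by $(n-a,n-b)$, so the parity statement $a\equiv b$ is invariant under the reduction, and passing from $\F$ to a subset does not change $X_1,X_2$; hence (ii) holds for the original relation.)

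The step I expect to be the main obstacle is making precise that "every row of $S$ has the same number of $1$s modulo $2$" — i.e.\ that reducing mod $2$ is legitimate even though the $F_t$ may be of mixed type. This is where the binary reduction pays off: after it, row $r$ of $F_t$ contains $\ell_t$ ones independent of $r$, so row $r$ of $S$ contains $\sum_t \ell_t$ ones (as an integer, before reduction) independent of $r$, and in particular its parity is independent of $r$; the block form then supplies two expressions, $n-b$ and $b$, for that invariant parity, as long as both a row in $X_1$ and a row outside $X_1$ actually occur. The degenerate cases $a\in\{0,n\}$ or $b\in\{0,n\}$ need a brief separate check: if exactly one of $a,b$ is in $\{0,n\}$ the relation is constant (contradiction), and if both are in $\{0,n\}$ it is again constant; so non-constancy guarantees we have genuine rows and columns on both sides of each partition, and the double count goes through.
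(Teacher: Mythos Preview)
Your strategy---reduce to binary, invoke the block form \eref{block-matrix}, and exploit that every row (and every column) of the $\Z_2$-sum $S$ has $\sum_t\ell_t$ ones modulo $2$---is sound and is essentially a repackaging of the paper's count in the array $\mathcal O$ (their $x=y$ is your $\sum_t\ell_t$). But two concrete steps in your write-up are wrong and need repair.

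First, the derivation of (ii) is garbled. From ``$n-b\equiv b$ and $n-a\equiv a$'' you only get $n$ even, twice; the line ``$2a\equiv 2b\pmod n$ combined with $n$ even yields $a\equiv b\pmod2$'' does not follow from anything and does not imply what you claim. The correct finish is already implicit in what you wrote: a row outside $X_1$ has $b$ ones, so $b\equiv\sum_t\ell_t\pmod2$; a column outside $X_2$ has $a$ ones, so $a\equiv\sum_t\ell_t\pmod2$; hence $a\equiv b\pmod2$. Second, your degenerate-case analysis misreads the definition. A relation is \emph{constant} when \emph{both} $|X_1|$ and $|X_2|$ lie in $\{0,n\}$; non-constancy only guarantees that \emph{at least one} of $a,b$ is strictly between $0$ and $n$. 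So you cannot assume rows on both sides of $X_1$ and columns on both sides of $X_2$ simultaneously. The fix is easy: say $0<b<n$ (complement $(X_1,X_2)$ if it is $a$ that is strictly between). The column comparison then gives both $n$ even and $a\equiv\sum_t\ell_t$; whichever type of row exists gives $b\equiv\sum_t\ell_t$ or $n-b\equiv\sum_t\ell_t$, and these agree once $n$ is even, so $a\equiv b$. (There is also a small subtlety you glossed over in ``pass to a subset satisfying a full relation'': when some $X_c=Y_c$ you must complement in pairs, and to keep $X_1,X_2$ unchanged those pairs must have indices $\ge3$; the paper notes that non-constancy guarantees such an index is always available.)
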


\begin{proof}
  Let $\F=\{F_1,\dots,F_k\}$ and let $\mathcal{O}$ be the array corresponding to $\F$.
  We first argue that we may assume that $\mathcal{R}=(X_1,X_2,\dots,X_{k+2})$ is a full relation. If $X_c=\emptyset$ for some $c\ge3$ then we can simply remove $F_{c-2}$ from $\F$. And if $X_c=Y_c$ for some $c\ge3$ then we can complement $X_c$ and some $X_{c'}$ for $c'\in\{3,\dots,k\}\setminus\{c\}$, before removing $F_{c-2}$ from $\F$. In this way we will reach a set of MOFS that satisfies a non-constant full relation, without changing $n$, $X_1$ or $X_2$. Note that a non-constant relation has to be non-trivial on at least one column other than the first two, which means we always have an option to choose $X_{c'}$.

  Since $\mathcal{R}$ is non-constant, it cannot be trivial on {\it both} of the first two columns of $\mathcal{O}$. Furthermore, if one of $X_1$ or $X_2$ is empty, we may swap the pair $(X_1,X_2)$ with $(X_1^{\mathsf{c}},X_2^{\mathsf{c}})$ and the resultant relation is still non-constant. So we may assume that both $X_1$ and $X_2$ are non-empty. Permute the rows and columns of the frequency squares so that $X_1=\{1,\dots,|X_1|\}$  and $X_2=\{1,\dots,|X_2|\}$. Hence, we may assume that $1 \in X_1 \cap X_2$.

Consider the rows of $\mathcal{O}$ that correspond to the first column of the frequency squares, namely, the rows for which $j=1$ in (\ref{array-row}). Let $s$ be the number of cells in these rows that contain symbols in $\mathcal{R}$. So $s=\big|\{(r,c) \in \mathcal{N} :  \mathcal{O}[r,c] \in X_c \text{ and } \mathcal{O}[r,2]=1\}\big|$ where $\mathcal{N}=[n^2] \times [k+2]$. We have
$$s = n+|X_1|+y,$$
where $y=\big|\{(r,c) \in \mathcal{N} : \mathcal{O}[r,c] \in X_c, \mathcal{O}[r,2]=1 \text{ and }3 \leq c \leq k+2\}\big|$.
By the definition of a relation, every row of $\mathcal{O}$ must have an even number of symbols in $\mathcal{R}$. Hence $s$ must be even and we have
\begin{equation}\label{rel-parity3} |X_1| \equiv n+y \mod 2. \end{equation}
We can do a similar count of the symbols in $\mathcal{R}$ along the rows of $\mathcal{O}$ that correspond to the cells in the first row of the frequency squares. This yields
\begin{equation}\label{rel-parity2} |X_2| \equiv n+x \mod 2, \end{equation}
where $x=\big|\{(r,c) \in \mathcal{N} : \mathcal{O}[r,c] \in X_c, \mathcal{O}[r,1]=1 \text{ and }3 \leq c \leq k+2\}\big|$.
However, given any frequency square in $\F$, the multi-set of symbols in its first row is precisely the same as the multi-set of symbols in its first column. So we must have $x=y$, which combines with \eref{rel-parity3} and \eref{rel-parity2} to prove \emph{(ii)}.

Since $\mathcal{R}$ is non-constant we cannot have $|X_1|=|X_2|=n$. Firstly, suppose $|X_1|<n$. Let $t=|\{(r,c) \in \mathcal{N}: \mathcal{O}[r,c] \in X_c \text{ and } \mathcal{O}[r,1]=|X_1|+1\}|$ be the number of cells in $\mathcal{O}$ that correspond to row $|X_1|+1$ of the frequency squares and contain symbols in the relation. Since $|X_1|+1\notin X_1$, we have $t=|X_2|+x$.
Again, by the definition of a relation, $t$ must be even.
Combining this with (\ref{rel-parity2}) we have
$x\equiv |X_2| \equiv x+n\bmod 2$, so $n$ is even.

In the case when $|X_1|=n$ we must have $|X_2|<n$. So we can deduce that $n$ is even using a similar argument to the above, but considering column $|X_2|+1$ of the frequency squares. This completes the proof of part \emph{(i)}.
\end{proof}

In the binary setting, we can say a little more about the parities of $|X_1|$ and $|X_2|$. Firstly we provide a simple result that we will use several times.

\begin{lemma}\label{lm:parity}
Let $\F=\{F_1,F_2,\dots,F_k\}$ be a set of binary frequency squares where $F_t$ has type $(n; \lambda_{0,t},\lambda_{1,t})$ for $1 \leq t \leq k$. Then $\sum_{t=1}^k \lambda_{0,t} \equiv \sum_{t=1}^k \lambda_{1,t} \bmod 2$ if and only if at least one of $k$ or $n$ is even.
\end{lemma}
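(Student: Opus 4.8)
The plan is to compute $\sum_{t=1}^k \lambda_{0,t} + \sum_{t=1}^k \lambda_{1,t}$ in two different ways and compare. On one hand, since $F_t$ has type $(n;\lambda_{0,t},\lambda_{1,t})$ we have $\lambda_{0,t}+\lambda_{1,t}=n$ for each $t$, so $\sum_{t=1}^k \lambda_{0,t} + \sum_{t=1}^k \lambda_{1,t} = kn$. Thus $\sum_{t=1}^k \lambda_{0,t} \equiv \sum_{t=1}^k \lambda_{1,t} \bmod 2$ holds if and only if $kn$ is even, which is equivalent to at least one of $k$ or $n$ being even. That is literally the whole proof: the two sums differ by $kn$ (actually they are $\sum \lambda_{1,t}$ and $kn - \sum\lambda_{1,t}$), so their difference $\sum_{t=1}^k(\lambda_{0,t}-\lambda_{1,t})$ has the same parity as $kn = \sum_{t=1}^k(\lambda_{0,t}+\lambda_{1,t})$.

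So the key step is just the observation $\lambda_{0,t}+\lambda_{1,t}=n$, which is recorded right at the start of the introduction ($n=\sum_i\lambda_i$). After that it is the elementary number-theoretic fact that $kn$ is even iff $k$ is even or $n$ is even. I would write it in one or two sentences: note $\sum_t\lambda_{0,t} - \sum_t\lambda_{1,t} = \sum_t(\lambda_{0,t}-\lambda_{1,t}) \equiv \sum_t(\lambda_{0,t}+\lambda_{1,t}) = kn \pmod 2$, and $kn\equiv 0\pmod 2$ precisely when $k$ or $n$ is even.

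There is essentially no obstacle here; the lemma is a bookkeeping statement that isolates a parity condition the authors will invoke repeatedly (presumably in the binary specialisation of \tref{thm-rel-gen} and in the type-maximality arguments of \sref{rel-max}). The only thing to be slightly careful about is the phrasing of the "if and only if" — one should state it as: the congruence fails exactly when $kn$ is odd, i.e.\ exactly when both $k$ and $n$ are odd. Written contrapositively this is "the congruence holds iff at least one of $k,n$ is even," matching the statement. A one-line proof suffices, and I would present it as such rather than belabour it.

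Concretely, the proof I would write is: \emph{Since $\lambda_{0,t}+\lambda_{1,t}=n$ for each $t$, we have}
\[
\sum_{t=1}^k \lambda_{0,t} - \sum_{t=1}^k \lambda_{1,t} \equiv \sum_{t=1}^k (\lambda_{0,t}+\lambda_{1,t}) = kn \pmod 2.
\]
\emph{Hence $\sum_{t=1}^k \lambda_{0,t} \equiv \sum_{t=1}^k \lambda_{1,t} \bmod 2$ if and only if $kn$ is even, which happens if and only if at least one of $k$ or $n$ is even.}
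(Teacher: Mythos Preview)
Your proof is correct and essentially identical to the paper's: both sum the identity $\lambda_{0,t}+\lambda_{1,t}=n$ over $t$ to obtain $kn=\sum_t\lambda_{0,t}+\sum_t\lambda_{1,t}$, from which the parity equivalence follows immediately. The paper's version is just your one-line argument stated even more tersely.
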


\begin{proof}
Summing $n = \lambda_{0,t} +\lambda_{1,t}$ for $1 \leq t \leq k$ yields $kn = \sum_{t=1}^k \lambda_{0,t} + \sum_{t=1}^k \lambda_{1,t}$, and the result follows.
\end{proof}

\begin{theorem}\label{thm-rel}
Let $\F=\{F_1,F_2,\dots,F_k\}$ be a set of binary frequency squares where $F_t$ has type $(n; \lambda_{0,t},\lambda_{1,t})$ for $1 \leq t \leq k$. If $\F$ satisfies a non-constant full $(a,b)$-relation then 
$$a \equiv b \equiv \sum_{t=1}^k \lambda_{0,t} \equiv \sum_{t=1}^k \lambda_{1,t} \mod 2.$$
\end{theorem}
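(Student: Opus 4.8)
The plan is to deduce \tref{thm-rel} from two results already in hand, together with one short counting argument. Since a non-constant full $(a,b)$-relation is in particular a non-constant relation, \tref{thm-rel-gen} tells us immediately that $n$ is even and that $a=|X_1|\equiv|X_2|=b\bmod2$. As $n$ is even, \lref{lm:parity} gives $\sum_{t=1}^k\lambda_{0,t}\equiv\sum_{t=1}^k\lambda_{1,t}\bmod2$. Hence all four quantities in the statement split into the two ``known'' pairs $\{a,b\}$ and $\{\sum_t\lambda_{0,t},\sum_t\lambda_{1,t}\}$, and it remains only to link one pair to the other; I will show $b\equiv\sum_{t=1}^k\lambda_{1,t}\bmod2$.

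To do this I would pass to the $\Z_2$-sum $S=[s_{r,c}]$ of $\F$, where $s_{r,c}\equiv\sum_{t=1}^kF_t[r,c]\bmod2$. On one hand, summing along any single row $r$ gives $\sum_{c=1}^n s_{r,c}\equiv\sum_{t=1}^k\sum_{c=1}^n F_t[r,c]=\sum_{t=1}^k\lambda_{1,t}\bmod2$, and this holds for any set of binary frequency squares, with no relation assumed. On the other hand, because the relation is \emph{full} and the squares are binary, each $X_c$ with $c\ge3$ equals $\{0\}$ or $\{1\}$ (it is a non-empty proper subset of $\{0,1\}$). Feeding this into the defining congruence of the relation at the cell $(r,c)$, and using that the event ``$F_t[r,c]\in X_{t+2}$'' contributes $F_t[r,c]$ or $1-F_t[r,c]$ to the count according as $X_{t+2}=\{1\}$ or $X_{t+2}=\{0\}$, one obtains
\[
s_{r,c}\;\equiv\;[\,r\in X_1\,]+[\,c\in X_2\,]+\delta\bmod2 ,
\]
where $[\,\cdot\,]$ is the $0/1$ indicator and $\delta\in\{0,1\}$ is a constant depending only on how many of the $X_{t+2}$ equal $\{0\}$. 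Summing this identity over $c\in\{1,\dots,n\}$ and using that $n$ is even yields $\sum_{c=1}^n s_{r,c}\equiv|X_2|=b\bmod2$. Comparing the two evaluations of $\sum_{c=1}^n s_{r,c}$ gives $b\equiv\sum_{t=1}^k\lambda_{1,t}\bmod2$, which completes the proof.

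I expect the only delicate point to be the bookkeeping around the constant $\delta$, that is, the fact that a priori some $X_{t+2}$ may be $\{0\}$ rather than $\{1\}$; this is harmless here precisely because $n$ is even, so the term $n\delta$ vanishes mod $2$. One can sidestep it altogether by first complementing the pairs $(X_1,X_{t+2})$ so that every $X_{t+2}=\{1\}$, which changes $|X_1|$ only between $a$ and $n-a$ (equal mod $2$ since $n$ is even) and leaves $b$, $\F$, fullness and non-constancy unchanged. Equivalently, this step is just \lref{lem-MOFS-2019}: a full relation forces $S$, after permuting rows and columns, into the block form \eref{block-matrix} with the top-left zero block of size $a\times b$, and then every row of that matrix has either $b$ or $n-b$ ones. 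Finally, the analogous count down a column of $S$ shows $a\equiv\sum_{t=1}^k\lambda_{1,t}\bmod2$ directly, so one could even bypass part~(ii) of \tref{thm-rel-gen}.
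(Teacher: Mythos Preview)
Your proof is correct and follows essentially the same route as the paper: invoke \tref{thm-rel-gen} for $n$ even and $a\equiv b$, invoke \lref{lm:parity} for $\sum_t\lambda_{0,t}\equiv\sum_t\lambda_{1,t}$, and then link $b$ to $\sum_t\lambda_{1,t}$ by counting ones along a single row. The only cosmetic difference is packaging: the paper reuses the internal equations \eref{rel-parity3} and \eref{rel-parity2} from the proof of \tref{thm-rel-gen} (after normalising so each $X_c=\{1\}$), whereas you phrase the same row-count via the $\Z_2$-sum and the block form of \lref{lem-MOFS-2019}.
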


\begin{proof}
Let $\F=\{F_1,F_2,\dots,F_k\}$ be a set of binary frequency squares where $F_t$ has type $(n; \lambda_{0,t},\lambda_{1,t})$ for $1 \leq t \leq k$. Suppose that $\F$ satisfies a non-constant full $(a,b)$-relation $\mathcal{R}=(X_1,X_2,\dots,X_{k+2})$ and let $\mathcal{O}$ be the array corresponding to $\F$. Since we are dealing with binary frequency squares, we may assume $X_c = \{1\}$ for $3 \leq c \leq k+2$. \tref{thm-rel-gen} implies that $n$ is even, and this combined with \eref{rel-parity3} and \eref{rel-parity2} yields $a \equiv b \equiv x \bmod 2$,
where $x$ counts the total number of ones in the first rows of the frequency squares. Hence,
\begin{equation}\label{bin-rel}
a \equiv b \equiv \sum_{t=1}^k \lambda_{1,t} \mod 2.
\end{equation}
Since $n$ is even, \lref{lm:parity} implies that $\sum_{t=1}^k \lambda_{1,t}$ and $\sum_{t=1}^k \lambda_{0,t}$ have the same parity. Combining this with (\ref{bin-rel}) completes the proof.
\end{proof}

If we only allow binary frequency squares of the same type, \tref{thm-rel} implies the following.

\begin{corollary}\label{cor-rel}
Let $\F$ be a set of $k$ binary frequency squares of type $(n; \lambda_0,\lambda_1)$ that satisfies a non-constant full $(a,b)$-relation. Then 
$a \equiv b \equiv \lambda_0 k \equiv \lambda_1 k \mod 2$.
\end{corollary}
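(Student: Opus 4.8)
The plan is to obtain this as an immediate specialisation of \tref{thm-rel}. First I would note that saying every $F_t$ has type $(n;\lambda_0,\lambda_1)$ is exactly saying $\lambda_{0,t}=\lambda_0$ and $\lambda_{1,t}=\lambda_1$ for each $t\in\{1,\dots,k\}$, so that $\sum_{t=1}^k\lambda_{0,t}=\lambda_0 k$ and $\sum_{t=1}^k\lambda_{1,t}=\lambda_1 k$. The hypothesis here, that $\F$ is a set of binary frequency squares satisfying a non-constant full $(a,b)$-relation, is precisely the hypothesis of \tref{thm-rel}, so its conclusion $a\equiv b\equiv\sum_{t=1}^k\lambda_{0,t}\equiv\sum_{t=1}^k\lambda_{1,t}\bmod2$ translates directly into $a\equiv b\equiv\lambda_0 k\equiv\lambda_1 k\bmod2$, which is the assertion.

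There is no real obstacle: the only thing to verify is that no extra assumption is being introduced, and indeed \tref{thm-rel} already handles binary frequency squares of arbitrary (mixed) type, so the uniform-type situation is a strict special case. As an alternative I could re-run the short argument behind \tref{thm-rel} verbatim, invoking \tref{thm-rel-gen} to get that $n$ is even, then \eref{rel-parity3} and \eref{rel-parity2} to get $a\equiv b\equiv x\bmod2$ where $x$ counts ones in the first rows (so $x\equiv\lambda_1 k$), and finally \lref{lm:parity} with $kn=\lambda_0 k+\lambda_1 k$ to conclude $\lambda_0 k\equiv\lambda_1 k\bmod2$. But quoting \tref{thm-rel} as a black box is cleaner and avoids duplicating that computation, so that is the route I would take in the write-up.
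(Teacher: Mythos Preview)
Your proposal is correct and matches the paper's own treatment: the paper states \Cref{cor-rel} immediately after \tref{thm-rel} with the remark that it follows by specialising to frequency squares all of the same type, and gives no separate proof. Your observation that $\sum_t\lambda_{j,t}=k\lambda_j$ is exactly the substitution needed, so there is nothing to add.
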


It is important to note that in Theorems \ref{thm-rel-gen} and \ref{thm-rel} it is necessary that we require the relation to be non-constant. This is illustrated in the following example, which mirrors an example in \cite{JJ-2020}.

\begin{example}\label{constant-rel}
The following is a set of three frequency squares of type $(3;2,1)$ that satisfies a constant full $(3,0)$-relation. We also give their $\Z_2$-sum, which we note is a constant block of ones:
\[ 
\left[\begin{array}{cccccc}
1&0&0\\
0&1&0\\
0&0&1\end{array} \right]
+
\left[ \begin{array}{cccccc}
0&1&0\\
0&0&1\\
1&0&0\end{array} \right]
+
\left[ \begin{array}{cccccc}
0&0&1\\
1&0&0\\
0&1&0\end{array} \right]
\equiv
\left[ \begin{array}{cccccc}
1&1&1\\
1&1&1\\
1&1&1\end{array} \right].
\]
As $n$ is odd, and $3=a \not\equiv b=0 \bmod 2$, this example fails both conclusions of \tref{thm-rel-gen}. The conclusion of \Cref{cor-rel} also fails, because $a \not\equiv b$ and $6=k\lambda_0\not\equiv k\lambda_1=3\bmod2$.
\end{example}

The following result deals with the case where the relation is constant. 

\begin{theorem}\label{thm-con-rel}
Let $\F=\{F_1,F_2,\dots,F_k\}$ be a set of binary frequency squares where $F_t$ has type $(n; \lambda_{0,t},\lambda_{1,t})$ for $1 \leq t \leq k$.
If $\F$ satisfies a constant full $(n,b)$-relation we have 
\begin{itemize}
\item[(i)] If $b=n$ then $\sum_{t=1}^k \lambda_{1,t} \equiv 0 \bmod 2$, and
\item[(ii)] If $b=0$ then $\sum_{t=1}^k \lambda_{1,t} \equiv n \bmod 2$.
\end{itemize}
\end{theorem}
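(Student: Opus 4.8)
The plan is to reduce the statement, via \lref{lem-MOFS-2019}, to a one-line count along a single row of the $\Z_2$-sum of $\F$. The key observation is that a \emph{constant full} $(n,b)$-relation forces the $\Z_2$-sum of the whole set $\F$ to be one of the two constant matrices, with the choice pinned down by $b$.

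First I would set up the binary normalisation exactly as in the proof of \tref{thm-rel}: since the squares are binary we may assume $X_c=\{1\}$ for $3\le c\le k+2$, and after permuting rows and columns we may take $X_1=\{1,2,\dots,n\}$ (recall $|X_1|=n$) and $X_2=\{1,2,\dots,b\}$. In particular $r\in X_1$ for \emph{every} row index $r$. Now for a cell $(r,c)$ let $x_{r,c}\equiv\sum_{t=1}^k F_t[r,c]\bmod 2$ denote the entry of the $\Z_2$-sum of $\F$; because the relation is full, this is the $\Z_2$-sum of the entire set. By the definition of a relation (as in the proof of \lref{lem-MOFS-2019}) we have $x_{r,c}=1$ precisely when exactly one of $r\in X_1$, $c\in X_2$ holds. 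Since $r\in X_1$ always, this says $x_{r,c}=1$ if and only if $c\notin X_2$; that is, the $\Z_2$-sum of $\F$ is the all-zeros matrix when $b=n$ and the all-ones matrix when $b=0$. (Equivalently, one reads this off the degenerate block structure \eref{block-matrix} supplied by \lref{lem-MOFS-2019}, in which the lower two blocks vanish because $|X_1|=n$.)

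The last step is to count the ones in the first row. Since the first row of $F_t$ contains $\lambda_{1,t}$ ones,
$$\sum_{t=1}^k\lambda_{1,t}=\sum_{c=1}^n\sum_{t=1}^k F_t[1,c]\equiv\sum_{c=1}^n x_{1,c}\bmod 2.$$
When $b=n$ the right-hand side is a sum of $n$ zeros, giving $\sum_{t}\lambda_{1,t}\equiv 0$, which is part (i); when $b=0$ it is a sum of $n$ ones, giving $\sum_{t}\lambda_{1,t}\equiv n$, which is part (ii).

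There is no serious obstacle here; the only thing to be careful about is the bookkeeping that identifies \emph{which} constant matrix the $\Z_2$-sum is, i.e.\ keeping $X_1$ of full size $n$ while working under the standing normalisation $X_c=\{1\}$ for $c\ge3$, and then correctly reading off the degenerate block picture. It is also worth stressing (in contrast with Theorems~\ref{thm-rel-gen} and \ref{thm-rel}) that no parity restriction on $n$ is needed or obtained; this is consistent with Example~\ref{constant-rel}, where $n=3$ is odd and the constant full $(3,0)$-relation there indeed gives $\sum_t\lambda_{1,t}=3\equiv n\bmod 2$.
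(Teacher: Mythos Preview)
Your proof is correct and takes essentially the same approach as the paper's: both amount to counting the ones appearing in the first row of the superimposed squares under the normalisation $X_c=\{1\}$ for $c\ge3$. The paper simply packages this count by invoking \eref{rel-parity2} from the proof of \tref{thm-rel-gen} (which immediately gives $n+b+\sum_t\lambda_{1,t}\equiv0\bmod2$), whereas you reach the identical conclusion by reading off the constant $\Z_2$-sum via \lref{lem-MOFS-2019}; these are two phrasings of the same one-line row count.
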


\begin{proof}
Suppose $\F$ satisfies a constant full $(n,b)$-relation $\mathcal{R}=(X_1,\dots,X_{k+2})$ and let $\mathcal{O}$ be the array corresponding to $\F$. Since $|X_1|=n$, we have $1\in X_1$, so we can apply \eref{rel-parity2}. It yields that $n+b+\sum_{t=1}^k \lambda_{1,t} \equiv 0 \bmod 2$. Since $\mathcal{R}$ is constant, $b \in \{0,n\}$ and the result follows.
\end{proof}

If each square is of the same type, \tref{thm-con-rel} implies a constant full relation cannot be satisfied in some cases. 

\begin{corollary}\label{cor-con-rel}
Suppose $k$, $\lambda_0$ and $\lambda_1$ are all odd and let $\F$ be a set of $k$ binary frequency squares of type $(n;\lambda_0,\lambda_1)$. If $\F$ satisfies a full relation, then this relation must be non-constant.
\end{corollary}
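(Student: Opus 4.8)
The plan is to argue by contradiction: assume $\F$ satisfies a \emph{constant} full relation $\mathcal{R}=(X_1,X_2,\dots,X_{k+2})$ and derive a parity contradiction from \tref{thm-con-rel}. First I would recall two facts already established in the text. A constant relation satisfies $\{|X_1|,|X_2|\}\subseteq\{0,n\}$; and complementing the pair $(X_1,X_2)$ turns an $(a,b)$-relation into an $(n-a,n-b)$-relation while preserving both constancy and fullness (fullness concerns only columns $3,\dots,k+2$, which are untouched by this complementation). Hence, after possibly replacing $(X_1,X_2)$ by $(X_1^{\mathsf c},X_2^{\mathsf c})$, I may assume $|X_1|=n$, so $\mathcal{R}$ is a constant full $(n,b)$-relation with $b\in\{0,n\}$ and \tref{thm-con-rel} applies.

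Next I would substitute the hypothesis that every square in $\F$ has the same type $(n;\lambda_0,\lambda_1)$, so that $\sum_{t=1}^k\lambda_{1,t}=k\lambda_1$. Since $k$ and $\lambda_1$ are both odd, $k\lambda_1$ is odd; and since $\lambda_0$ and $\lambda_1$ are both odd, $n=\lambda_0+\lambda_1$ is even. Now split into the two cases allowed by \tref{thm-con-rel}. If $b=n$, part (i) forces $\sum_{t=1}^k\lambda_{1,t}=k\lambda_1\equiv0\bmod2$, contradicting that $k\lambda_1$ is odd. If $b=0$, part (ii) forces $k\lambda_1\equiv n\bmod2$, i.e.\ an odd number congruent to an even number, again a contradiction. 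In either case we reach a contradiction, so the full relation satisfied by $\F$ cannot be constant.

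I do not anticipate any genuine obstacle: the whole argument is bookkeeping with \tref{thm-con-rel} together with the elementary observations that a product of two odd numbers is odd and a sum of two odd numbers is even. The only step needing a little care is the reduction to the case $|X_1|=n$ by complementation, where one must check that complementing the first two coordinates of a relation leaves it full and constant — which is immediate from the definitions.
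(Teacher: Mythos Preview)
Your proof is correct and essentially the same as the paper's: both argue by contradiction, invoke \tref{thm-con-rel}, and use that $n=\lambda_0+\lambda_1$ is even while $k\lambda_1$ is odd. The paper's version is terser because it combines your two cases (since $n$ is even, cases (i) and (ii) of \tref{thm-con-rel} both yield $k\lambda_1\equiv0\bmod2$) and leaves the reduction to $|X_1|=n$ implicit, relying on the earlier remark that every constant relation is equivalent to an $(n,n)$- or $(n,0)$-relation.
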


\begin{proof}
Suppose that $\F$ satisfies a constant full relation. Note that $n=\lambda_0+\lambda_1\equiv0\bmod2$. So \tref{thm-con-rel} implies that $k\lambda_1=\sum_{i=1}^k \lambda_1 \equiv 0 \bmod 2$. However, this is impossible, since both $k$ and $\lambda_1$ are odd.
\end{proof}

\section{Relations on binary MOFS}\label{s:relbinMOFS}

So far, we have only presented necessary conditions for sets of frequency squares to satisfy a relation. If we also suppose the frequency squares are binary and mutually orthogonal then we have more restrictions. The results in this section are generalisations of results in \cite{MOFS-2019} and \cite{JJ-2020}.

\begin{theorem}\label{thm-rel2}
Let $\F=\{F_1,F_2, \dots, F_k\}$ be a set of binary {\rm MOFS} where $F_t$ has type $(n; \lambda_{0,t},\lambda_{1,t})$ for $1 \leq t \leq k$. If $\F$ satisfies a non-constant full relation then either
\begin{itemize}
\item[(i)]$\lambda_{0,t} \equiv \lambda_{1,t} \equiv 0\bmod2$ for all $t \in \{1,2,\dots,k\}$, or
\item[(ii)]$\lambda_{0,t} \equiv \lambda_{1,t} \equiv 1\bmod2$ for an odd number of $t \in \{1,2,\dots,k\}$.
\end{itemize}
\end{theorem}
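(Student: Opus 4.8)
The plan is to push the two earlier relation theorems as far as they go and then attack the remaining case with a counting argument modulo~$4$. Normalise the relation (by complementing pairs $(X_1,X_c)$ as in the paper's discussion) so that $X_c=\{1\}$ for $3\le c\le k+2$; since the relation is \emph{full}, after this normalisation $X_c=\{1\}$ for every $c\ge3$, so all $k$ squares participate, and by \lref{lem-MOFS-2019} the $\Z_2$-sum $S\equiv\sum_{t=1}^kF_t$ is, after permuting rows and columns, the block matrix \eref{block-matrix}, whose two zero blocks have sizes $a\times b$ and $(n-a)\times(n-b)$, where $a=|X_1|$ and $b=|X_2|$ (degenerate blocks allowed).

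First I would clear the easy reductions. By \tref{thm-rel-gen}, $n$ is even, so $\lambda_{0,t}\equiv\lambda_{1,t}\bmod2$ for every $t$; thus each square is of \emph{even type} ($\lambda_{0,t},\lambda_{1,t}$ both even) or \emph{odd type} ($\lambda_{0,t},\lambda_{1,t}$ both odd). Write $O$ for the number of odd-type squares, so that $O\equiv\sum_t\lambda_{1,t}\bmod2$. By \tref{thm-rel}, $a\equiv b\equiv\sum_t\lambda_{1,t}\bmod2$, hence $O\equiv a\equiv b\bmod2$. If $O$ is odd then conclusion~(ii) holds and we are done, so from now on assume $O$ is even; we must prove $O=0$, which is conclusion~(i).

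The main step is a double count of $\sum_{i,j}S[i,j]^2$. Expanding the square and using mutual orthogonality ($F_s\circ F_t=\lambda_{1,s}\lambda_{1,t}$ for $s\ne t$, and $F_t\circ F_t=n\lambda_{1,t}$) gives $\sum_{i,j}S[i,j]^2=nL+L^2-\sum_t\lambda_{1,t}^2$, where $L=\sum_t\lambda_{1,t}$. On the other hand, each entry $S[i,j]$ is congruent mod~$2$ to the corresponding $0$--$1$ entry of \eref{block-matrix}, so $S[i,j]^2$ is congruent mod~$4$ to that entry; summing, $\sum_{i,j}S[i,j]^2$ is congruent mod~$4$ to the number of $1$s in \eref{block-matrix}, namely $n(a+b)-2ab$. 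Since $O$ is even, the numbers $a$, $b$, $L$ and $n$ are all even, so every term of this congruence except $-\sum_t\lambda_{1,t}^2$ vanishes modulo~$4$; as $\sum_t\lambda_{1,t}^2\equiv O\bmod4$ (each $\lambda_{1,t}^2$ being $0$ or $1$ mod~$4$ according as $\lambda_{1,t}$ is even or odd), we obtain $O\equiv0\bmod4$.

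The remaining obstacle --- where I expect essentially all the difficulty to lie --- is to upgrade ``$O\equiv0\bmod4$'' to ``$O=0$'', since no single modular count of this kind can achieve it on its own (running the same count on the complements $J_n-F_t$, or with odd coefficients, produces congruences of the same strength). To close the gap I would return to the block decomposition and use orthogonality more structurally: track, for each square $F_t$, the number of $1$s it places in each of the four blocks (these are constrained both by the row- and column-frequencies of $F_t$ and by the fact that $S$ is constant on each block) and feed these quantities into the pairwise orthogonality identities. A plausible target is to show that a positive value of $O$ would force the odd-type squares to be balanced of type $(n;n/2,n/2)$ with $n\equiv2\bmod4$, whereupon the balanced versions of this result in \cite{MOFS-2019} and \cite{JJ-2020} would yield a contradiction.
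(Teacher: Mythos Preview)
Your reductions and the $\bmod\;4$ computation of $\sum_{i,j}S[i,j]^2$ are correct, and they do deliver $O\equiv0\bmod4$ once $O$ is assumed even. However, as you yourself flag, that is as far as the argument goes, and there is no modular identity of this kind that will push ``$O\equiv0\bmod4$'' to ``$O=0$''. The structural programme you outline at the end (tracking block-counts for each square, hoping to force the odd-type squares to be balanced with $n\equiv2\bmod4$) is speculative and, as stated, does not constitute a proof; there is no reason the orthogonality equations should pin down the odd-type squares that tightly.

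The paper sidesteps all of this by a short $\bmod\;2$ count that singles out one odd-type square rather than treating them symmetrically. Assuming $O>0$ is even, pick $F_1$ with $\lambda_{1,1}$ odd, and let $p$ be the total number of $(1,1)$-pairs between $F_1$ and the remaining squares $F_2,\dots,F_k$. By orthogonality, $p=\lambda_{1,1}\sum_{t=2}^k\lambda_{1,t}$, and since exactly $O-1$ of the summands are odd, $p$ is odd. On the other hand, partition the cells with $F_1=1$ into $\Omega_0$ and $\Omega_1$ according to the parity of $S$; the block picture with $a,b$ even forces $|\Omega_1|=a\lambda_{1,1}+b\lambda_{1,1}-2(\hbox{overlap})$ to be even, and $|\Omega_0|+|\Omega_1|=n\lambda_{1,1}$ is even, so $|\Omega_0|$ is even. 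But each cell of $\Omega_0$ contributes an odd amount to $p$ and each cell of $\Omega_1$ an even amount, so $p\equiv|\Omega_0|\equiv0\bmod2$, a contradiction. The point you missed is that focusing on one odd-type square lets orthogonality give you an odd number directly, rather than burying the parity information inside a symmetric sum over all $k$ squares.
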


\begin{proof}
  Suppose $\F$ satisfies a non-constant full $(a,b)$-relation $\mathcal{R}=(X_1,X_2,\dots,X_{k+2})$. Suppose the symbol $1$ has odd frequency in a non-zero even number of squares. By \tref{thm-rel}, both $a$ and $b$ must be even. Without loss of generality, let $\lambda_{1,1}$ be odd. Let $\Omega_0$ (respectively $\Omega_1$) be the set of cells $(r, c)$ for which $F_1[r, c] = 1$ and in which the superposition of $F_1,\dots,F_k$ has an even (respectively, odd) number of ones. By the definition of a relation, $\Omega_1$ counts those cell $(r,c)$ for which $F_1[r,c]=1$ and precisely one of $r \in X_1$ or $c \in X_2$ holds. We claim that $|\Omega_1|$ is even, since it can be obtained by counting the (even) number of ones in the rows of $ F_1$ with indices in $X_1$, adding the (even) number of ones in the columns of $F_1$ with indices in $X_2$, and subtracting twice the number of ones in the intersection. Clearly we have $|\Omega_0|+|\Omega_1|=n \lambda_{1,1}$. Note that $n$ must be even by \tref{thm-rel-gen}(i), since $\F$ satisfies a non-constant full relation. Hence $|\Omega_0|$ is also even.

Now let $p$ be the total number of pairs of ones in the superposition of $F_1$ with the other $(k -1)$ squares. For $2\leq t \leq k$ each square $F_t$ contributes $\lambda_{1,1}\lambda_{1,t}$ to $p$, so we have
$$p=\lambda_{1,1} \sum_{t=2}^k \lambda_{1,t}.$$

By our assumption, $\lambda_{1,t}$ is odd for an odd number of $t \in \{2,\dots,k\}$ and therefore $p$ is odd. However, each cell in $\Omega_0$ contributes an odd number of times to $p$ and each cell in $\Omega_1$ contributes an even number, showing that $p\equiv |\Omega_0| \equiv 0 \bmod 2$. This contradiction proves the result holds for the symbol $1$. Furthermore, since $n$ is even, for each $1\leq t \leq k$ we have $\lambda_{1,t} \equiv \lambda_{0,t} \bmod 2$, and we are done.
\end{proof}

If we consider the case where all squares have the same type, \tref{thm-rel2} implies the following result.

\begin{corollary}\label{cor-rel2}
Suppose $\lambda_0$ and $\lambda_1$ are odd and let $\F$ be a set of $k$-{\rm MOFS}$(n;\lambda_0,\lambda_1)$ that satisfies a non-constant full relation. Then $k$ must be odd.
\end{corollary}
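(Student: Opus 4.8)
The plan is to derive this directly from Theorem~\ref{thm-rel2} by specialising to squares of a common type $(n;\lambda_0,\lambda_1)$ with $\lambda_0,\lambda_1$ both odd. First I would observe that, since every square has the same type, the hypothesis ``$\lambda_{0,t}\equiv\lambda_{1,t}\equiv0\bmod2$ for all $t$'' of alternative (i) in Theorem~\ref{thm-rel2} is simply impossible here: we have $\lambda_{0,t}=\lambda_0$ and $\lambda_{1,t}=\lambda_1$ for every $t$, and these are assumed odd. So alternative (i) is vacuous under our hypotheses, and Theorem~\ref{thm-rel2} forces alternative (ii): the symbol $1$ has odd frequency in an odd number of the squares $F_t$.

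Next I would translate ``an odd number of $t$'' into a statement about $k$. Since $\lambda_{1,t}=\lambda_1$ is odd for \emph{every} $t\in\{1,\dots,k\}$, the number of indices $t$ for which $\lambda_{1,t}$ is odd is exactly $k$ itself. Thus alternative (ii) reads ``$k$ is odd'', which is the desired conclusion. That completes the proof.

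The argument is essentially a one-line deduction, so there is no real obstacle; the only point requiring a moment's care is checking that the non-constant full relation hypothesis of the corollary is exactly the hypothesis needed to invoke Theorem~\ref{thm-rel2}, which it is. I would also note in passing that Example~\ref{constant-rel}, with its constant full $(3,0)$-relation on three type-$(3;2,1)$ squares, shows why the non-constant assumption cannot be dropped — though here $\lambda_0=2$ is even, so that example does not directly contradict the corollary; rather it is Corollary~\ref{cor-con-rel} that handles the constant case and shows that when $k,\lambda_0,\lambda_1$ are all odd a full relation, if it exists at all, must be non-constant, dovetailing neatly with the present result. I might include a one-sentence remark to this effect, but the formal proof needs only the two steps above.

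\begin{proof}
Suppose $\F$ satisfies a non-constant full relation. Since every square of $\F$ has type $(n;\lambda_0,\lambda_1)$ with $\lambda_0$ and $\lambda_1$ odd, we have $\lambda_{0,t}=\lambda_0\equiv1$ and $\lambda_{1,t}=\lambda_1\equiv1\bmod2$ for every $t\in\{1,\dots,k\}$. In particular, alternative \emph{(i)} of \tref{thm-rel2} cannot hold, so alternative \emph{(ii)} must hold: the symbol $1$ has odd frequency in an odd number of the squares $F_t$. But it has odd frequency in \emph{all} $k$ of them, so this ``odd number'' is exactly $k$. Hence $k$ is odd.
\end{proof}
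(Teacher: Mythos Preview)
Your proof is correct and follows exactly the approach the paper intends: the corollary is stated immediately after \tref{thm-rel2} with the remark that it follows by specialising to squares of a common type, and your argument spells out precisely that specialisation.
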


It turns out we can say a little more about $k$, by working modulo $4$.

\begin{theorem}\label{thm-rel1}
Suppose $\lambda_0$ and $\lambda_1$ are odd, and let $\F$ be a set of $k$-{\rm MOFS}$(n;\lambda_0,\lambda_1)$ that satisfies a non-constant full relation. Then $k \equiv \lambda_0\lambda_1 \bmod 4$.
\end{theorem}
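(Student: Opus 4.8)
The plan is to refine the counting argument used in the proof of \tref{thm-rel2}, but now tracking quantities modulo $4$ rather than modulo $2$. By \Cref{cor-rel2} we already know $k$ is odd, and since $\lambda_0,\lambda_1$ are odd, $\lambda_0\lambda_1$ is odd, so both sides of $k\equiv\lambda_0\lambda_1\bmod4$ are odd; the content is pinning down which odd residue class modulo $4$ we land in. Fix a non-constant full $(a,b)$-relation $\mathcal R=(X_1,\dots,X_{k+2})$ satisfied by $\F$, with $X_c=\{1\}$ for $c\ge3$. By \tref{thm-rel-gen}(i), $n$ is even, and by \tref{thm-rel}, $a$ and $b$ are even (since $\sum\lambda_{1,t}=k\lambda_1$ is odd times odd, wait --- check: here all squares have the same type, so $\sum_t\lambda_{1,t}=k\lambda_1$, which is odd; so by \tref{thm-rel} $a\equiv b\equiv1\bmod2$, i.e.\ $a,b$ are \emph{odd}). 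So $a$ and $b$ are odd, and I will want their residues modulo $4$ as well.

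The key object is the superposition of all $k$ squares. For each cell $(r,c)$ let $w(r,c)$ be the number of squares $F_t$ with $F_t[r,c]=1$; by the relation, $w(r,c)$ has the parity of $[r\in X_1]+[c\in X_2]$. The plan is to compute $S:=\sum_{(r,c)}\binom{w(r,c)}{2}$ in two ways. On one hand, $S$ counts ordered-or-unordered coincident pairs of ones across the squares: $S=\sum_{t<t'}\big|\{(r,c):F_t[r,c]=F_{t'}[r,c]=1\}\big|=\binom{k}{2}\lambda_1^2$, using pairwise orthogonality (each pair of squares has exactly $\lambda_1\cdot\lambda_1$ cells where both are $1$). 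On the other hand, I split the $n^2$ cells into four blocks according to the $(a,b)$-relation: the block $A$ of size $ab$ where both $r\in X_1,c\in X_2$; the block $D$ of size $(n-a)(n-b)$ where neither holds — on $A\cup D$, $w(r,c)$ is even; and the two "off-diagonal'' blocks $B,C$ of sizes $a(n-b)$ and $(n-a)b$ where exactly one holds — there $w(r,c)$ is odd. Writing $w=2u$ on $A\cup D$ and $w=2u+1$ on $B\cup C$, we get $\binom{w}{2}=u(2u-1)\equiv u\bmod2$ in the even case and $\binom{w}{2}=u(2u+1)\equiv u\bmod2$ in the odd case, so modulo $2$, $S\equiv\sum_{(r,c)}u(r,c)=\sum_{(r,c)}\lfloor w(r,c)/2\rfloor$. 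That is not obviously enough; to get modulo $4$ I expect to need a second identity, tracking $\sum w(r,c)$ and $\sum\binom{w(r,c)}{2}$ together, or equivalently $\sum w(r,c)^2$. We have $\sum_{(r,c)}w(r,c)=k\lambda_1 n$ (each square contributes $\lambda_1 n$ ones), and $\sum_{(r,c)}w(r,c)^2=2S+\sum w(r,c)=\binom{k}{2}\lambda_1^2\cdot2+k\lambda_1 n=k(k-1)\lambda_1^2+k\lambda_1 n$.

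To extract the residue modulo $4$ I would localize the count: within block $A$ (rows in $X_1$, columns in $X_2$), for a fixed row $r\in X_1$ the number of ones of $F_t$ in that row is $\lambda_1$ (odd), and the relation forces the number of ones of $F_t$, summed over $t$, restricted to columns in $X_2$, to have a controlled parity. The cleanest route is probably the one already hinted at in \tref{thm-rel2}: for a single square $F_1$ with $\lambda_1$ odd, the set $\Omega_1$ of its one-cells lying in an "odd'' block has even size, computed as (ones of $F_1$ in rows $X_1$)$\,+\,$(ones in columns $X_2$)$\,-\,2\cdot$(ones in $A$), i.e.\ $a'\lambda_1+b'\lambda_1-2q_1$ where $a'=a$, $b'=b$ are the numbers of selected rows/columns. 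Doing this for each $t$ and each symmetric copy, and assembling $\sum_{(r,c)\in A}w(r,c)$, $\sum_{(r,c)\in A}\binom{w(r,c)}{2}$ modulo $4$ in terms of $a,b,k,\lambda_1$, then feeding in the global identities above, should isolate $k\lambda_1^2\equiv$ (something determined by $a,b,n$) $\bmod4$; since $\lambda_1^2\equiv1\bmod8$ for odd $\lambda_1$, this becomes $k\equiv$ (something) $\bmod4$, and \tref{thm-rel}'s control of $a,b\bmod2$ together with $n=\lambda_0+\lambda_1$ and a final parity bookkeeping should collapse "something'' to $\lambda_0\lambda_1\bmod4$. The main obstacle is precisely this last bookkeeping: getting a clean handle on $\sum_{(r,c)\in A}\binom{w(r,c)}{2}\bmod4$ requires knowing not just the parity of the column-restricted one-counts of each $F_t$ but enough about their joint distribution, and I expect the orthogonality of the squares (which controls $\sum_{t<t'}$ coincidences \emph{block by block}, since orthogonality is a global statement but the blocks are unions of whole rows and whole columns) to be exactly what rescues the computation — mirroring how \tref{thm-rel2}'s proof used $p=\lambda_{1,1}\sum_{t\ge2}\lambda_{1,t}$ split over $\Omega_0,\Omega_1$. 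I would be prepared for the possibility that the slickest proof instead counts triples or uses the vector-space/inner-product setup of \tref{gen-bound} to get a congruence directly, but the double-counting of coincident one-pairs modulo $4$ is the approach I would try first.
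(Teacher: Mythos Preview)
Your setup is exactly the paper's: you correctly derive $\sum_{(r,c)} w(r,c)^2 = k(k-1)\lambda_1^2 + k\lambda_1 n$ from the two counts (total ones and coincident pairs), and you correctly identify that $w(r,c)$ is odd precisely on the off-diagonal blocks $B\cup C$, of total size $a(n-b)+(n-a)b$. You are one observation away from being done, and you walk right past it: since $i^2\equiv 0\pmod4$ for even $i$ and $i^2\equiv 1\pmod4$ for odd $i$, reducing $\sum w^2$ modulo $4$ gives exactly the number of cells with $w$ odd. Thus
\[
k(k-1)\lambda_1^2 + k\lambda_1 n \;\equiv\; a(n-b)+(n-a)b \;=\; n(a+b)-2ab \pmod 4,
\]
and the rest is the bookkeeping you anticipated: $\lambda_1^2\equiv1$, $n=\lambda_0+\lambda_1$, $a,b,k$ all odd so $n(a+b)\equiv0$ and $2ab\equiv2k^2\pmod4$, giving $k(k+\lambda_0\lambda_1)\equiv 2k^2\pmod4$ and hence $k\equiv\lambda_0\lambda_1\pmod4$.

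The block-wise analysis you flag as the ``main obstacle''---tracking $\sum_{(r,c)\in A}\binom{w(r,c)}{2}\bmod 4$ and invoking the $\Omega_0,\Omega_1$ machinery of \tref{thm-rel2}---is unnecessary and, as you yourself sense, awkward. The point is not to compute $\binom{w}{2}$ modulo $4$ but to pass to $w^2$, whose residues modulo $4$ are trivial. So your approach is the paper's; the gap is a missed one-line simplification, not a missing idea.
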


\begin{proof}
  Suppose $\F$ is a set of $k$-MOFS$(n;\lambda_0,\lambda_1)$ that satisfies a non-constant full relation $(X_1,X_2, \dots X_{k+2})$.
  Consider superimposing the $k$-MOFS, and define $t_i$ to be the number of the resulting $k$-tuples that contain precisely $i$ ones. Counting the total number of ones in all $k$ squares in two ways yields
\begin{equation}\label{rel1} kn\lambda_1 = \sum_{i=0}^k it_i. \end{equation}
Superimposing pairs of squares in $\F$, we can also count the number of $(1,1)$ pairs in two ways. Since the squares are mutually orthogonal, there must be $\lambda_1^2$ such pairs for each pair of squares, giving
\begin{equation}\label{rel2} {k \choose 2} \lambda_1^2 = \sum_{i=0}^{k} {i\choose2} t_i.
\end{equation}
Doubling  (\ref{rel2}) and adding (\ref{rel1}) yields
$$k(k-1)\lambda_1^2+kn\lambda_1 = \sum_{i=0}^k i^2t_1.$$
Since $\lambda_1^2 \equiv 1 \bmod 4$, $i^2\equiv1 \bmod 4$ for all odd $i$, and $i^2\equiv0 \bmod 4$ for all even $i$, we have
\begin{equation}\label{rel3} k(k-1) +kn\lambda_1 \equiv \sum_{\text{odd }i} t_i \bmod 4.
\end{equation}
The right hand side of (\ref{rel3}) gives the number of $k$-tuples with an odd number of ones. Since $\F$ satisfies the relation $(X_1,X_2,\dots,X_{k+2})$, these tuples must correspond precisely to cells $(r,c)$ of the MOFS for which exactly one of $r \in X_1$ or $c \in X_2$ holds. The number of these cells is $|X_2|(n-|X_1|)+|X_1|(n-|X_2|)$. Equating this to \eref{rel3} gives
\begin{equation}\label{rel4}
  k(k-1) +kn\lambda_1 \equiv n(|X_1|+|X_2|)-2|X_1||X_2| \mod 4.
\end{equation}
Now $n$ is even and \Cref{cor-rel} implies $|X_1|\equiv |X_2| \equiv k \bmod 2$, so $n(|X_1|+|X_2|)$ is divisible by $4$. Substituting $n=\lambda_0+\lambda_1$ into \eref{rel4} and simplifying, we find
$$ k(k+\lambda_0\lambda_1) \equiv 2k^2 \mod 4.$$
\Cref{cor-rel2} says that $k$ is odd, so this last equation simplifies to
$k \equiv \lambda_0\lambda_1 \bmod 4$.
\end{proof}

\section{Conditions implying type-maximality}\label{rel-max}

In order for the results of the previous section to be useful to us, we need to understand the connection between relations and maximality. More specifically, we would like to know under what conditions a set of MOFS that satisfies a relation can be extended to a larger set. In \cite{MOFS-2019} and \cite{JJ-2020} the authors proved that relations can be used to certify that sets of balanced MOFS are type-maximal:

\begin{theorem}\label{max-MOFS-2019}
Suppose $k$ and $\lambda$ are both odd. Let $\F$ be a set of $k$-{\rm MOFS}$(n; \lambda)$ that satisfies a non-constant full relation. Then $\F$ is type-maximal.
\end{theorem}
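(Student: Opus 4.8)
The plan is to argue by contradiction. Suppose $\F=\{F_1,\dots,F_k\}$ is a set of $k$-MOFS$(n;\lambda)$ satisfying a non-constant full relation but is not type-maximal. Every square of $\F$ has type $(n;\lambda,\lambda)$, so the hypothetical extending square $F$ must also have type $(n;\lambda,\lambda)$; in particular $F$ is a binary frequency square with exactly $\lambda$ ones in every row and every column, and it is orthogonal to each $F_t$.

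The key is to count, modulo $2$, the set $S=\{(r,c):F[r,c]=1\text{ and }\Sigma[r,c]=1\}$, where $\Sigma\equiv\sum_{t=1}^{k}F_t\bmod 2$ is the $\Z_2$-sum of $\F$, and to evaluate $|S|$ in two ways. First, summing over $t$ the number of $(1,1)$ pairs in the superposition of $F$ and $F_t$ gives $k\lambda^2$ by orthogonality; the same quantity equals $\sum_{(r,c):F[r,c]=1}m_{r,c}$, where $m_{r,c}=|\{t:F_t[r,c]=1\}|$, and this sum is congruent modulo $2$ to the number of cells with $F[r,c]=1$ and $m_{r,c}$ odd, which is exactly $|S|$ since $\Sigma[r,c]\equiv m_{r,c}\bmod 2$. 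As $k$ and $\lambda$ are odd, $k\lambda^2$ is odd, so $|S|$ is odd.

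Second, by \lref{lem-MOFS-2019} (and its proof) the $\Z_2$-sum $\Sigma$ is, after a suitable permutation of the rows and columns, the block matrix \eref{block-matrix} with an $a\times b$ corner block of zeros, where $a$ and $b$ record the $(a,b)$-relation (the values $a,b$ modulo $2$ being insensitive to any complementation needed to put the relation in the form $X_c=\{1\}$, $c\ge3$, since $n$ is even). Applying the same permutation to $F$ leaves it a frequency square of type $(n;\lambda,\lambda)$ still orthogonal to each square of $\F$, so we may assume $\Sigma$ has this form. Then $S$ consists of the ones of $F$ lying in the two off-diagonal blocks: the first $a$ rows of $F$ contain $a\lambda$ ones, of which $C$ lie in the top-left $a\times b$ block, and the first $b$ columns contain $b\lambda$ ones, of which $C$ lie in that same block, so $|S|=(a\lambda-C)+(b\lambda-C)=(a+b)\lambda-2C\equiv(a+b)\lambda\bmod 2$. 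Comparing with the first evaluation forces $(a+b)\lambda$, and hence $a+b$, to be odd. But $\F$ is a set of $k$ binary frequency squares of type $(n;\lambda,\lambda)$ satisfying a non-constant full $(a,b)$-relation, so \Cref{cor-rel} gives $a\equiv b\equiv\lambda k\bmod 2$; since $\lambda$ and $k$ are odd this makes $a+b$ even, a contradiction. Therefore $\F$ is type-maximal.

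I expect the only real obstacle to be assembling the correct two-way count: in particular, justifying that reducing $\sum_{(r,c):F[r,c]=1}m_{r,c}$ modulo $2$ records exactly the parity of the number of cells with $m_{r,c}$ odd, and checking that the doubly-counted corner block $C$ cancels. Finding this count is essentially the whole argument; it uses only a single parity check and none of the modulo-$4$ refinement of \tref{thm-rel1}.
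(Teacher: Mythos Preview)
Your argument is correct and is essentially the same double-count modulo $2$ that underlies the paper's treatment: the paper does not prove \tref{max-MOFS-2019} directly (it is cited from prior work), but its generalisation via \tref{thm-block-structure} and \tref{thm-max} specialised to $w=2$, $x_1=x_4=0$, $x_2=x_3=1$ computes exactly your two expressions $k\lambda^2$ and $(a+b)\lambda-2C$ for the number of $(1,1)$ pairs between $F$ and $\F$, and reaches the same contradiction with $a\equiv b\bmod 2$. Your direct presentation is a clean specialisation of that machinery.
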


Our goal in this section is to explore possible generalisations of \tref{max-MOFS-2019}. We do this in several ways. Firstly, we consider sets of MOFS which are of mixed type. Perhaps more interestingly, we generalise the idea of a relation. Applications of relations essentially boil down to employing parity arguments. We will show that it can sometimes be useful to work with moduli that are larger than 2.

Prior work has mostly considered what we are calling type-maximality.
It is time to illustrate that type-maximality and maximality are
indeed different properties. 

\begin{example}\label{notmax} 
Consider the following $5$-{\rm MOFS}$(6;3,3)$, which was shown in {\rm \cite{MOFS-2019}} to be type-maximal:
\begin{equation}\label{5-MOFS(6)}
	\left[\begin{array}{cccccc}
		11111&10001&10010&00000&01111&01100\\
		10001&10101&11110&00110&01001&01010\\
		10010&11110&01000&11101&00011&00101\\
		00000&00110&11101&00111&11000&11011\\
		01111&01001&00011&11000&10110&10100\\
		01100&01010&00101&11011&10100&10011
	\end{array} \right].
\end{equation}
Note that ${\rm (\ref{5-MOFS(6)})}$ does not satisfy a full relation, since its $\Z_2$-sum is the identity matrix. Furthermore, this set of {\rm MOFS} is not maximal, since for example, it is orthogonal to the following $2$-{\rm MOFS}$(6;5,1)$:
\begin{equation}\label{not-max}
	\left[\begin{array}{cccccc}
		00&10&00&00&01&00\\
		01&0	0&00&10&00&00\\
		00&00&11&00&00&00\\
		00&01&00&00&00&10\\
		10&00&00&00&00&01\\
		00&00&00&01&10&00
	\end{array} \right].
\end{equation}
Combining ${\rm (\ref{5-MOFS(6)})}$ and ${\rm (\ref{not-max})}$ gives a binary $7$-{\rm MOFS}$(6)$, containing frequency squares of two different types.
\end{example}

In the following result, we start with a set of binary MOFS of the same type, and examine the conditions under which it can be extended by a frequency square of any type. Rather than focusing solely on the block structure in (\ref{block-matrix}), we state things more generally for any nice block structure. Let $J_{c,d}$ denote a $c\times d$ block of ones.

\begin{theorem}\label{thm-block-structure}
  Let $w$ be a non-negative integer.
Let $\F=\{F_1,F_2,\dots,F_k\}$ be a set of binary {\rm MOFS} where $F_t$ has type $(n; \lambda_{0,t},\lambda_{1,t})$ for $1 \leq t \leq k$. Suppose $\F$ has $\Z_w$-sum that, up to permutation of the rows and columns, has the following structure of constant blocks
\begin{equation}\label{block-matrix3}
 \left[ \begin{array}{cc}
{x_1}J_{a,b} & {x_2}J_{a,n-b} \\
{x_3}J_{n-a,b} & {x_4}J_{n-a,n-b}
\end{array}\right],
\end{equation}
where $x_1,x_2,x_3,x_4$ are non-negative integers satisfying $x_1+x_4\equiv x_2+x_3 \bmod w$. Let $F$ be a square of type $(n; \mu_0, \mu_1,\dots,\mu_{m-1})$. If  $\F \cup \{F\}$ is a set of $(k+1)$-{\rm MOFS} then for $0 \leq i \leq m-1$ we have
\begin{equation*}
\mu_i \sum_{t=1}^k \lambda_{1,t}\equiv a\mu_i(x_2-x_4)+b\mu_i(x_3-x_4) + \mu_i n x_4 \mod w.
\end{equation*}
\end{theorem}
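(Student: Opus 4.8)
The plan is to double count, modulo $w$, the total number of ones among $F_1,\dots,F_k$ that fall in the cells occupied by a fixed symbol $i$ of $F$. Write $S_i=\{(r,c):F[r,c]=i\}$, so that $|S_i|=\mu_i n$ since symbol $i$ has frequency $\mu_i$ in $F$, and for each cell put $\sigma(r,c)=\sum_{t=1}^{k}F_t[r,c]$, the number of ones among the squares of $\F$ in that cell. Since each $F_t[r,c]\in\{0,1\}$, the integer $\sigma(r,c)$ is congruent mod $w$ to the entry of the $\Z_w$-sum of $\F$ at $(r,c)$. On the one hand, orthogonality of $F$ with each $F_t$ makes the pair $(i,1)$ occur exactly $\mu_i\lambda_{1,t}$ times in the superposition of $F$ and $F_t$, so $\sum_{(r,c)\in S_i}\sigma(r,c)=\mu_i\sum_{t=1}^{k}\lambda_{1,t}$. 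On the other hand, I would evaluate the same sum modulo $w$ using the block structure (\ref{block-matrix3}).

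For this, let $R$ and $C$ be the sets of $a$ rows and $b$ columns that induce the top-left block of (\ref{block-matrix3}), and for $p,q\in\{1,2\}$ let $\alpha_{pq}$ be the number of cells of $S_i$ in the $(p,q)$ block (for instance $\alpha_{11}$ counts those with $r\in R$ and $c\in C$). Then $\sum_{(r,c)\in S_i}\sigma(r,c)\equiv \alpha_{11}x_1+\alpha_{12}x_2+\alpha_{21}x_3+\alpha_{22}x_4\pmod w$. Because every row of $F$ contains $\mu_i$ copies of $i$, summing over the $a$ rows of $R$ gives $\alpha_{11}+\alpha_{12}=a\mu_i$, and similarly $\alpha_{11}+\alpha_{21}=b\mu_i$, while $\alpha_{11}+\alpha_{12}+\alpha_{21}+\alpha_{22}=\mu_i n$. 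Solving this system expresses $\alpha_{12},\alpha_{21},\alpha_{22}$ in terms of the single free parameter $\alpha_{11}$, namely $\alpha_{12}=a\mu_i-\alpha_{11}$, $\alpha_{21}=b\mu_i-\alpha_{11}$ and $\alpha_{22}=\mu_i n-a\mu_i-b\mu_i+\alpha_{11}$.

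Substituting these into $\alpha_{11}x_1+\alpha_{12}x_2+\alpha_{21}x_3+\alpha_{22}x_4$, the coefficient of $\alpha_{11}$ collapses to $x_1-x_2-x_3+x_4$, which is $\equiv 0\pmod w$ by the hypothesis $x_1+x_4\equiv x_2+x_3\pmod w$; what survives is $a\mu_i x_2+b\mu_i x_3+(\mu_i n-a\mu_i-b\mu_i)x_4=a\mu_i(x_2-x_4)+b\mu_i(x_3-x_4)+\mu_i n x_4$. Equating the two evaluations of $\sum_{(r,c)\in S_i}\sigma(r,c)$ modulo $w$ then yields the asserted congruence. I do not expect a genuine obstacle here: the only points needing a little care are observing that $\sigma(r,c)$ reduces mod $w$ to the corresponding $\Z_w$-sum entry (immediate from the binariness of the $F_t$) and confirming that the constraints on $F$ leave exactly one degree of freedom among the $\alpha_{pq}$, at which point the congruence $x_1+x_4\equiv x_2+x_3$ is precisely what is required to remove it.
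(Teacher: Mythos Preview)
Your proof is correct and follows essentially the same approach as the paper: both fix a symbol $i$ of $F$, double count modulo $w$ the total number of $(i,1)$ coincidences between $F$ and the squares of $\F$, express the counts of $i$ in the four blocks in terms of a single free parameter, and then use $x_1+x_4\equiv x_2+x_3\pmod w$ to eliminate that parameter. Your $\alpha_{11},\alpha_{12},\alpha_{21},\alpha_{22}$ are exactly the paper's $q_1,q_2,q_3,q_4$, and the algebra is identical.
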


\begin{proof}
  Suppose we can extend $\F$ to a $(k+1)$-MOFS$(n)$ by a new square $F$ of type $(n; \mu_0, \mu_1,\dots,\mu_{m-1})$. Write
\[
F= \left[ \begin{array}{cc}
     Q_1&Q_2\\
     Q3&Q_4
\end{array}\right],
\]
where the four subarrays of $F$ correspond to the blocks of (\ref{block-matrix3}). 
Fix a symbol $i$ in $F$. Define $q_j$ to be the number of copies of $i$ in $F$ in the subarray $Q_j$ for $1\le j\le4$. Since $F$ contains precisely $\mu_i$ copies of $i$ in every row and column, we have $q_2=a\mu_i-q_1$ and $q_3=b\mu_i-q_1$. Since $i$ occurs $n\mu_i$ times in $F$, we have $q_4=n\mu_i-(q_1+q_2+q_3)=n\mu_i-(a+b)\mu_i+q_1$. Since $\F \cup \{F\}$ is a set of MOFS, when we superimpose $F$ with each square $F_t \in \F$ we get the pair $(i,1)$ appearing precisely $\mu_i\lambda_{1,t}$ times, so $\mu_i \sum_{t=1}^k \lambda_{1,t}$ times in total. By the $\Z_w$-sum of $\F$, each $i$ in $Q_j$ of $F$ must contribute $x_j \bmod w$ to the total number of $(i,1)$ pairs. Hence, modulo $w$, we have
\begin{align*}
\mu_i \sum_{t=1}^k \lambda_{1,t}& \equiv x_1q_1+x_2q_2+x_3q_3+x_4q_4 \\
&\equiv x_1q_1 + x_2(a\mu_i-q_1)+x_3(b\mu_i-q_1)+x_4(n\mu_i-(a+b)\mu_i+q_1) \\
&\equiv q_1(x_1+x_4-x_2-x_3)+a\mu_i(x_2-x_4)+b\mu_i(x_3-x_4)+\mu_inx_4.
\end{align*}
Since $x_1+x_4-x_2-x_3\equiv 0 \bmod w$ we have shown the result.
\end{proof}

\begin{corollary}\label{cor-block-structure}
Let $\F=\{F_1,F_2,\dots,F_k\}$ be a set of binary {\rm MOFS} where $F_t$ has type $(n; \lambda_{0,t},\lambda_{1,t})$ for $1 \leq t \leq k$ and suppose some non-negative integers $w, x_1, x_2, x_3, x_4$ satisfy the hypotheses of \tref{thm-block-structure}. Let $F$ be a square of type $(n; \mu_0, \mu_1,\dots,\mu_{m-1})$ such that $\F \cup \{F\}$ is a set of $(k+1)$-{\rm MOFS}. If $\gcd(\mu_i,w)=1$ for some $i \in \{0,1,\dots,m-1\}$ then 
\begin{equation}\label{cor-(1,1)count}
\sum_{t=1}^k \lambda_{1,t}\equiv a(x_2-x_4)+b(x_3-x_4) + n x_4 \mod w.
\end{equation}
\end{corollary}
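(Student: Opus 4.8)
The plan is to derive this directly from \tref{thm-block-structure} by specialising to the symbol $i$ whose multiplicity $\mu_i$ is coprime to $w$. Applying that theorem gives, for every symbol $i$,
\[
\mu_i \sum_{t=1}^k \lambda_{1,t}\equiv a\mu_i(x_2-x_4)+b\mu_i(x_3-x_4) + \mu_i n x_4 \pmod{w},
\]
and the only work remaining is to cancel the common factor $\mu_i$ from both sides. For the specific $i$ with $\gcd(\mu_i,w)=1$, the integer $\mu_i$ is a unit modulo $w$, so it has a multiplicative inverse $\mu_i^{-1}$ in $\Z_w$.

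First I would instantiate \tref{thm-block-structure} at this particular $i$, noting that the hypotheses of that theorem (the block structure of the $\Z_w$-sum of $\F$ and the condition $x_1+x_4\equiv x_2+x_3\bmod w$) are exactly what is assumed in the corollary, and that $\F\cup\{F\}$ being a set of $(k+1)$-MOFS is also assumed. Then I would multiply the resulting congruence through by $\mu_i^{-1}\bmod w$; since $\mu_i\mu_i^{-1}\equiv1\bmod w$, this clears $\mu_i$ from the left-hand side and from each of the three terms on the right, leaving exactly \eref{cor-(1,1)count}. That is the whole argument.

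There is essentially no obstacle here: the corollary is a one-line consequence of the theorem once one observes that coprimality of $\mu_i$ and $w$ is precisely the condition needed to invert $\mu_i$ modulo $w$. The only thing to be slightly careful about is that the cancellation must be justified via the existence of an inverse modulo $w$ (equivalently, that $\mu_i x\equiv \mu_i y\bmod w$ with $\gcd(\mu_i,w)=1$ forces $x\equiv y\bmod w$), rather than by an illegitimate "division'' in $\Z_w$; but this is standard. I would keep the proof to two or three sentences accordingly.
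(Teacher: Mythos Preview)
Your proposal is correct and matches the paper's treatment: the paper states the corollary without proof, since it follows immediately from \tref{thm-block-structure} by cancelling the factor $\mu_i$ (which is invertible modulo $w$ precisely because $\gcd(\mu_i,w)=1$). Your two-to-three sentence write-up is exactly what is needed.
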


We now provide some examples which demonstrate the power of this last result. The first shows the maximality of a set of MOFS of odd order. This is notable because \tref{thm-rel-gen} shows that non-constant relations only apply to MOFS of even order.

\begin{example}\label{ex-p-rel}
Consider the following $2$-{\rm MOFS}$(3;2,1)$, along with its $\Z_3$-sum:
\[ 
\left[\begin{array}{cccccc}
1&0&0\\
0&1&0\\
0&0&1\end{array} \right]
+
\left[ \begin{array}{cccccc}
1&0&0\\
0&0&1\\
0&1&0\end{array} \right]
\equiv
\left[ \begin{array}{cccccc}
2&0&0\\
0&1&1\\
0&1&1\end{array} \right].
\]
Testing \eref{cor-(1,1)count} with $w=3$,
$\lambda_{1,1}=\lambda_{1,2}=a=b=x_4=1$ and $x_2=x_3=0$,
we find that this set of ${\rm MOFS}$ is maximal.
\end{example}

\begin{example}\label{pseudo-rel}\setlength{\arraycolsep}{2pt}
  Below, we give two sets of $2$-{\rm MOFS}$(6;4,2):$
\[ 
\left[\begin{array}{cccccc}
1&1&0&0&0&0\\
1&1&0&0&0&0\\
0&0&1&1&0&0\\
0&0&1&1&0&0\\
0&0&0&0&1&1 \\
0&0&0&0&1&1 \end{array}
\right]
+
\left[\begin{array}{cccccc}
1&1&0&0&0&0\\
1&1&0&0&0&0\\
0&0&0&0&1&1\\
0&0&0&0&1&1\\
0&0&1&1&0&0 \\
0&0&1&1&0&0\end{array} \right]
\equiv
\left[\begin{array}{cccccc}
1&1&0&0&0&0\\
1&1&0&0&0&0\\
0&0&1&1&0&0\\
0&0&1&0&1&0\\
0&0&0&1&0&1 \\
0&0&0&0&1&1 \end{array}
\right]
+
\left[\begin{array}{cccccc}
1&1&0&0&0&0\\
1&1&0&0&0&0\\
0&0&0&0&1&1\\
0&0&0&1&0&1\\
0&0&1&0&1&0 \\
0&0&1&1&0&0\end{array} \right]
\equiv
\left[\begin{array}{cccccc}
0&0&0&0&0&0\\
0&0&0&0&0&0\\
0&0&1&1&1&1\\
0&0&1&1&1&1\\
0&0&1&1&1&1 \\
0&0&1&1&1&1\end{array} \right].
\]
As shown, both have the same $\Z_2$-sum. Neither satisfies a relation, although in some sense they get close. Applying \Cref{cor-block-structure} with $w=3$, we see that neither of the above pairs can be extended to a triple with a square of type $(6;5,1)$ or $(6;4,2)$. In particular, both pairs are type-maximal. However, neither pair is maximal as both can be extended to a triple using, for example, the following square of type $(6;3,3):$
\[\left[\begin{array}{cccccc}
1&0&0&0&1&1\\
0&1&0&0&1&1\\
1&1&0&1&0&0\\
1&1&1&0&0&0\\
0&0&1&1&1&0\\
0&0&1&1&0&1\\
  \end{array} \right].
\]
\end{example}

We have seen that if a set of binary MOFS satisfies a non-constant full relation then its $\Z_2$-sum has the block structure (\ref{block-matrix}), which is a special case of the more general block structure (\ref{block-matrix3}). Hence, we can combine \tref{thm-block-structure} with the results in \sref{Relations} to get the following result.

\begin{theorem}\label{thm-max}
Let $\F=\{F_1,F_2,\dots,F_k\}$ be a set of binary {\rm MOFS} where $F_t$ has type $(n; \lambda_{0,t},\lambda_{1,t})$ for $1 \leq t \leq k$. Suppose $\F$ satisfies a non-constant full relation and let $F$ be a square of type $(n; \mu_0, \mu_1,\dots,\mu_{m-1})$. If  $\F \cup \{F\}$ is a set of $(k+1)$-{\rm MOFS} then
 $$\mu_i \sum_{t=1}^k \lambda_{0,t} \equiv \mu_i \sum_{t=1}^k \lambda_{1,t} \equiv 0 \mod 2,$$
 for $0 \leq i \leq m-1$.
\end{theorem}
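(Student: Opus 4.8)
The plan is to apply \tref{thm-block-structure} with $w = 2$, using the fact that a non-constant full relation gives us exactly the block structure we need. First I would invoke \lref{lem-MOFS-2019}: since $\F$ satisfies a non-constant full relation, its $\Z_2$-sum has, up to permutation of rows and columns, the block form in \eref{block-matrix}. This is precisely \eref{block-matrix3} with $w = 2$, $x_1 = x_4 = 0$, $x_2 = x_3 = 1$, and the compatibility condition $x_1 + x_4 \equiv x_2 + x_3 \bmod 2$ holds trivially ($0 \equiv 2$). Here $(a,b)$ is the pair coming from the relation, with $a = |X_1|$ and $b = |X_2|$.

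Next I would feed these values into the conclusion of \tref{thm-block-structure}. Substituting $x_2 - x_4 = 1$, $x_3 - x_4 = 1$, $x_4 = 0$ into the congruence gives, for each $i$,
\begin{equation*}
\mu_i \sum_{t=1}^k \lambda_{1,t} \equiv a\mu_i + b\mu_i \equiv (a + b)\mu_i \mod 2.
\end{equation*}
Now I would use \tref{thm-rel}, which tells us that a non-constant full $(a,b)$-relation on a set of binary frequency squares forces $a \equiv b \bmod 2$; hence $a + b$ is even and the right-hand side vanishes, giving $\mu_i \sum_{t=1}^k \lambda_{1,t} \equiv 0 \bmod 2$. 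To get the companion statement for $\lambda_{0,t}$, I would again invoke \tref{thm-rel} (or \lref{lm:parity} together with the fact that $n$ is even, which holds by \tref{thm-rel-gen}(i)): $\sum_t \lambda_{0,t} \equiv \sum_t \lambda_{1,t} \bmod 2$, so multiplying by $\mu_i$ preserves the congruence and $\mu_i \sum_t \lambda_{0,t} \equiv 0 \bmod 2$ as well.

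There is essentially no obstacle here: the theorem is a direct corollary obtained by specialising \tref{thm-block-structure} to the block pattern of \eref{block-matrix} and then citing the parity relations from \sref{Relations}. The only point requiring a moment's care is checking that the hypotheses of \tref{thm-block-structure} are genuinely met — in particular that the relation's block structure \emph{is} an instance of \eref{block-matrix3} with a valid choice of the $x_j$ and that the compatibility condition $x_1 + x_4 \equiv x_2 + x_3$ holds — but with the assignment $(x_1,x_2,x_3,x_4) = (0,1,1,0)$ this is immediate.
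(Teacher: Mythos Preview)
Your proposal is correct and follows essentially the same route as the paper: specialise \tref{thm-block-structure} with $w=2$ and $(x_1,x_2,x_3,x_4)=(0,1,1,0)$ via \lref{lem-MOFS-2019}, then use the parity $a\equiv b\bmod2$ (the paper cites \tref{thm-rel-gen}(ii) rather than \tref{thm-rel}, but either works) and finally \tref{thm-rel-gen}(i) with \lref{lm:parity} to pass from $\lambda_{1,t}$ to $\lambda_{0,t}$.
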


\begin{proof}
  Suppose $\F$ satisfies a non-constant full $(a,b)$-relation. By \lref{lem-MOFS-2019}, $\F$ satisfies the hypothesis of \tref{thm-block-structure} with $x_1=x_4=0$, $x_2=x_3=1$ and $w=2$. Let $F$ be a frequency square of type $(n; \mu_0, \mu_1,\dots,\mu_{m-1})$ and suppose $\F \cup \{F\}$ is a set of $(k+1)$-MOFS. Then \tref{thm-block-structure} implies
$$\mu_i \sum_{t=1}^k \lambda_{1,t} \equiv \mu_i(a+b) \mod 2,$$
for $0 \leq i \leq m-1$.

Furthermore, by \tref{thm-rel-gen}(ii) both $a$ and $b$ have the same parity, and therefore 
$\mu_i\sum_{t=1}^k \lambda_{1,t}$ must be even. By \tref{thm-rel-gen}(i) $n$ is even. Hence, $\sum_{t=1}^k \lambda_{1,t}$ and $\sum_{t=1}^k \lambda_{0,t}$ have the same parity, by \lref{lm:parity}. Therefore $\mu_i\sum_{t=1}^k \lambda_{0,t}$ must also be even.
\end{proof}

In the case where a symbol in the new square has odd frequency, \tref{thm-max} implies the following corollary.

\begin{corollary}\label{cor-odd-max}
Let $\F=\{F_1,F_2,\dots,F_k\}$ be a set of binary {\rm MOFS} where $F_t$ has type $(n; \lambda_{0,t},\lambda_{1,t})$ for $1 \leq t \leq k$ and suppose $\F$ satisfies a non-constant full relation. Let $F$ be a square of type $(n; \mu_0, \mu_1,\dots,\mu_{m-1})$ such that $\F \cup \{F\}$ is a set of $(k+1)$-{\rm MOFS}. If $\mu_i$ is odd for some $i \in \{0,1,\dots,m-1\}$ then $$\sum_{t=1}^k \lambda_{0,t} \equiv \sum_{t=1}^k \lambda_{1,t} \equiv 0 \mod 2.$$
\end{corollary}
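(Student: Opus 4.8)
The plan is to derive this directly from \tref{thm-max}. Suppose $\F$ satisfies a non-constant full relation and let $F$ be a square of type $(n;\mu_0,\mu_1,\dots,\mu_{m-1})$ such that $\F\cup\{F\}$ is a set of $(k+1)$-MOFS. Fix an index $i$ with $\mu_i$ odd. Applying \tref{thm-max} with this $i$, we obtain
\[
\mu_i\sum_{t=1}^k\lambda_{0,t}\equiv\mu_i\sum_{t=1}^k\lambda_{1,t}\equiv0\bmod2.
\]
Since $\mu_i$ is odd, multiplication by $\mu_i$ is invertible modulo $2$ (indeed $\mu_i\equiv1\bmod2$), so we may cancel the factor $\mu_i$ from each congruence to conclude $\sum_{t=1}^k\lambda_{0,t}\equiv\sum_{t=1}^k\lambda_{1,t}\equiv0\bmod2$, as required.

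There is essentially no obstacle here: the corollary is just the specialisation of \tref{thm-max} obtained by choosing the symbol $i$ to be one with odd frequency and then cancelling the odd factor modulo $2$. The only thing worth checking is that such an index $i$ exists, which is exactly the hypothesis ``$\mu_i$ is odd for some $i\in\{0,1,\dots,m-1\}$''. All of the work (the block-structure argument, the appeal to \lref{lem-MOFS-2019}, and the parity facts from \tref{thm-rel-gen} and \lref{lm:parity}) has already been done inside the proof of \tref{thm-max}, so the present proof is a one-line deduction.
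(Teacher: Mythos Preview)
Your proof is correct and follows exactly the approach intended by the paper: the corollary is stated immediately after \tref{thm-max} with the remark that it follows ``in the case where a symbol in the new square has odd frequency'', and your argument spells out precisely this one-line deduction by cancelling the odd factor $\mu_i$ modulo~$2$.
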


The contrapositive of Corollary \ref{cor-odd-max} can be used to show that under certain conditions, a set of MOFS can only be extended by squares with even frequencies. This is summarised in the following result.

\begin{theorem}\label{thm-max-gen}
Let $\F$ be a set of binary $k$-{\rm MOFS} that satisfies a non-constant full relation. Suppose there is some symbol $x \in \{0,1\}$ such that $x$ has odd frequency in at least one square in $\F$. If there exists a frequency square $F$ such that $\F \cup \{F\}$ is a set of $(k+1)$-{\rm MOFS}, then all symbols of $F$ must occur with even frequency.
\end{theorem}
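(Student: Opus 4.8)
The plan is to derive \tref{thm-max-gen} directly from \Cref{cor-odd-max} by a short contrapositive argument. Suppose, for contradiction, that $\F\cup\{F\}$ is a set of $(k+1)$-MOFS but $F$ has some symbol with odd frequency. Then the hypothesis ``$\mu_i$ is odd for some $i$'' in \Cref{cor-odd-max} is met, so that corollary forces $\sum_{t=1}^k\lambda_{0,t}\equiv\sum_{t=1}^k\lambda_{1,t}\equiv0\bmod2$. The task is then to contradict this using the extra assumption that some symbol $x\in\{0,1\}$ has odd frequency in at least one square of $\F$.

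The key step is to convert the per-square parity information into a statement about the sums $\sum_t\lambda_{0,t}$ and $\sum_t\lambda_{1,t}$. For this I would invoke \tref{thm-rel2}: since $\F$ satisfies a non-constant full relation, either (i) $\lambda_{0,t}\equiv\lambda_{1,t}\equiv0\bmod2$ for every $t$, or (ii) $\lambda_{0,t}\equiv\lambda_{1,t}\equiv1\bmod2$ for an odd number of values of $t$. The assumption that $x$ has odd frequency in some square rules out case (i) (note that by \tref{thm-rel-gen}(i) $n$ is even, so $\lambda_{0,t}$ and $\lambda_{1,t}$ always have the same parity, and $x$ having odd frequency in a square means \emph{both} frequencies in that square are odd). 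Hence we are in case (ii): there is an odd number of squares $F_t$ in which both $\lambda_{0,t}$ and $\lambda_{1,t}$ are odd, and in all remaining squares both are even. Summing, $\sum_{t=1}^k\lambda_{1,t}$ is a sum of an odd number of odd terms plus some even terms, hence is odd; likewise $\sum_{t=1}^k\lambda_{0,t}$ is odd. This directly contradicts the conclusion of \Cref{cor-odd-max}, completing the proof.

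There is essentially no computational obstacle here; the only point requiring a little care is the reduction from ``symbol $x$ has odd frequency in at least one square'' to ``case (ii) of \tref{thm-rel2} holds''. This needs the observation that $n$ being even (which follows from \tref{thm-rel-gen}(i), applicable because $\F$ satisfies a non-constant full relation) forces $\lambda_{0,t}\equiv\lambda_{1,t}\bmod2$ for all $t$, so the two symbols always have matching parity within a given square; thus odd frequency of \emph{one} symbol in \emph{one} square is exactly the failure of alternative (i). I would state this observation explicitly rather than leave it implicit, since it is the hinge of the argument. Everything else is a one-line parity count.
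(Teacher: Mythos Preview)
Your proposal is correct and follows essentially the same route as the paper: both proofs combine \Cref{cor-odd-max} with \tref{thm-rel2} to reach a parity contradiction, and your extra care in invoking \tref{thm-rel-gen}(i) to justify $\lambda_{0,t}\equiv\lambda_{1,t}\bmod2$ just makes explicit what the paper leaves implicit in its appeal to \tref{thm-rel2}. The only difference is the order in which the two ingredients are applied, which is immaterial.
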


\begin{proof}
Let $\F=\{F_1,F_2, \dots, F_k\}$, where $F_t$ has type $(n; \lambda_{0,t},\lambda_{1,t})$ for $1 \leq t \leq k$. Let $F$ be a frequency square of type $(n; \mu_0, \mu_1,\dots,\mu_{m-1})$ and suppose $\F \cup \{F\}$ is a set of $(k+1)$-MOFS. Furthermore, suppose there is a symbol $x \in \{0,1\}$ such that $x$ has odd frequency in at least one $F_i$. By \tref{thm-rel2}, $x$ must have odd frequency in an odd number of squares in $\F$, so $\sum_{t=1}^k \lambda_{x,t} \equiv 1 \bmod 2$. This contradicts the conclusion of Corollary \ref{cor-odd-max}, and thus $\mu_i$ must be even for all $0 \leq i \leq m-1$.
\end{proof}

\tref{thm-max-gen} implies the following result, which generalises \tref{max-MOFS-2019} to include sets of binary MOFS that are not necessarily balanced. 

\begin{corollary}\label{cor-max}
Suppose $k$, $\lambda_0$ and $\lambda_1$ are odd. Let $\F$ be a set of $k$-{\rm MOFS}$(n;\lambda_0,\lambda_1)$ that satisfies a full relation. Then $\F$ is type-maximal. 
\end{corollary}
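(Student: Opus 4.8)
The plan is to deduce \Cref{cor-max} from \tref{thm-max-gen} by checking its hypotheses. We are given a set $\F$ of $k$-MOFS$(n;\lambda_0,\lambda_1)$ with $k$, $\lambda_0$ and $\lambda_1$ all odd, and we are told $\F$ satisfies a full relation. The first thing I would do is dispose of the distinction between a general full relation and a non-constant full relation, since \tref{thm-max-gen} requires the latter. This is exactly what \Cref{cor-con-rel} provides: with $k$, $\lambda_0$, $\lambda_1$ all odd, any full relation satisfied by $\F$ must be non-constant. So we may invoke \tref{thm-max-gen} with a non-constant full relation in hand.

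Next I would verify the remaining hypothesis of \tref{thm-max-gen}, namely that there is a symbol $x\in\{0,1\}$ occurring with odd frequency in at least one square of $\F$. But every square of $\F$ has type $(n;\lambda_0,\lambda_1)$, and by assumption $\lambda_1$ (indeed also $\lambda_0$) is odd, so symbol $1$ has odd frequency in every square of $\F$. Thus the hypothesis holds with $x=1$. Applying \tref{thm-max-gen}, we conclude that if there is any frequency square $F$ with $\F\cup\{F\}$ a set of $(k+1)$-MOFS, then every symbol of $F$ occurs with even frequency.

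Finally I would translate this into the type-maximality statement. Suppose, for contradiction, that $\F$ is not type-maximal. Then by definition there is a frequency square $F$ of type $(n;\mu_0,\dots,\mu_{m-1})$ that is orthogonal to every square of $\F$, with $F$ having the same type as some $F_t\in\F$ --- that is, the type $(n;\lambda_0,\lambda_1)$. In particular $F$ is binary with $\mu_1=\lambda_1$ odd, so $F$ has a symbol of odd frequency. But $\F\cup\{F\}$ is a set of $(k+1)$-MOFS, so \tref{thm-max-gen} forces all frequencies of $F$ to be even, a contradiction. Hence $\F$ is type-maximal.

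I do not anticipate a genuine obstacle here: the corollary is essentially an assembly of \Cref{cor-con-rel} (to get non-constancy) and \tref{thm-max-gen} (to get the extension restriction), with the observation that a square of the same type as a member of $\F$ necessarily has an odd frequency. The only point requiring a little care is making sure the definition of type-maximality is applied correctly --- specifically that the competing square $F$ is required to share the type of \emph{some} square already in $\F$, which is what guarantees $\mu_1$ is odd and produces the contradiction.
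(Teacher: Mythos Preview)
Your proof is correct and follows essentially the same route as the paper: invoke \Cref{cor-con-rel} to upgrade the full relation to a non-constant one, then apply \tref{thm-max-gen} to force any extending square to have all even frequencies, which rules out an extension by a square of type $(n;\lambda_0,\lambda_1)$. You are a bit more explicit than the paper in checking the odd-frequency hypothesis of \tref{thm-max-gen} and in spelling out the contradiction with the definition of type-maximality, but the underlying argument is identical.
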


\begin{proof}
  By Corollary \ref{cor-con-rel}  the relation that $\F$ satisfies must be non-constant. Suppose that $\F \cup \{F\}$ is a set of $(k+1)$-MOFS for some frequency square $F$. By \tref{thm-max-gen} all symbols of $F$ must have even frequency.
  Since $\lambda_0$ and $\lambda_1$ are odd, this means that $\F$ is type-maximal.
\end{proof}

It is clear in \Cref{cor-max} that we could have reached a stronger conclusion than type-maximality. If a set of MOFS satisfies the hypotheses of \tref{thm-max-gen}, then we can only extend the set using squares with even frequencies. Example \ref{notmax} demonstrates that the relation plays a crucial role here. That example shows a $5$-MOFS$(6;3,3)$ that is type-maximal but can be extended by squares with odd frequencies. 

Unfortunately, \Cref{cor-max} does not generalise to the case where $\lambda_0$ and $\lambda_1$ are even. This was shown in \cite{MOFS-2019} where the authors provide an example of a set of $3$-MOFS$(n;\lambda_0,\lambda_1)$ with $\lambda_1=\lambda_0\equiv 0 \bmod 2$ that satisfies a non-constant full relation but is not type-maximal. 

We finish this section with two examples of \tref{thm-max-gen} at work.

\begin{example}\label{oddmax1}
The following is a set of $5$-{\rm MOFS}$(6;3,3)$ from {\rm \cite{MOFS-2019}}. Its $\Z_2$-sum, given on the right, shows it satisfies a full $(5,3)$-relation. By \tref{thm-max-gen}, it can only be extended by squares with even frequencies and is therefore type-maximal.
\begin{equation*}
\left[ \begin{array}{cccccc}
11011&10111&01100&00001&00010&11100 \\
10100&01111&11011&00010&11100&00001\\
01111&11000&10111&11100&00001&00010\\
01001&10001&00101&10110&01110&11010\\
10010&00110&01010&01101&11001&10101\\
00100&01000&10000&11011&10111&01111\end{array} \right]
\hspace{1.2cm}
\left[ \begin{array}{cccccc}
0&0&0&1&1&1\\
0&0&0&1&1&1\\
0&0&0&1&1&1\\
0&0&0&1&1&1\\
0&0&0&1&1&1\\
1&1&1&0&0&0 \end{array} \right]
\end{equation*}
Although it is type-maximal, it is not maximal, since it is orthogonal to the following set of $5$-{\rm MOFS}$(6;4,2)$.
\[
\left[ \begin{array}{cccccc}
01000&10111&01001&10000&00010&00100 \\
00100&10000&01000&00101&00011&11010 \\
10011&01110&10000&00000&01001&00100 \\
10000&00000&00101&01010&11100&00011 \\
00001&01000&00110&00010&10100&11001 \\
01110&00001&10010&11101&00000&00000 \end{array} \right] \]
Together these give a maximal set of $10$-{\rm MOFS}$(6)$ containing squares of two types.
\end{example}

\begin{example}\label{oddmax2}
The following is a set of $9$-{\rm MOFS}$(6;3,3)$ given in {\rm \cite{MOFS-2019}}, and chosen to satisfy a full $(3,3)$-relation:
\begin{equation}\label{e:9MOFS633} 
\left[ \begin{array}{cccccc}
111000001&001101010&010110100&100110011&001011101&110001110\\
 010110100&111000001&001101010&110001110&100110011&001011101\\
 001101010&010110100&111000001&001011101&110001110&100110011\\
 111111111&000000111&100011000&010101001&011010010&101100100\\
 100011000&111111111&000000111&101100100&010101001&011010010\\
 000000111&100011000&111111111&011010010&101100100&010101001\end{array}
\right].
\end{equation}
By \tref{thm-max-gen}, it can only be extended using squares of type $(6;4,2)$ and is therefore type-maximal.
However it can be extended by the following set of $4$-{\rm MOFS}$(6;4,2)$:
\begin{equation}\label{e:4MOFS642}
\left[ \begin{array}{cccccc}
0000&0000&0100&0011&1001&1110\\
 0001&1100&0010&0001&1100&0010\\
 1101&1010&0011&0100&0000&0000\\
 1010&0111&1001&0100&0000&0000\\
 0110&0001&1000&1000&0010&0101\\
 0000&0000&0100&1010&0111&1001
 \end{array} \right].
\end{equation}
Together, \eref{e:9MOFS633} and \eref{e:4MOFS642} form a maximal set of $13$-{\rm MOFS}$(6)$.
\end{example}

\section{Computational results}\label{comp}

We know from \Cref{cor-gen-bound} that an upper bound on the cardinality of a set of binary MOFS$(n)$ is $(n-1)^2$. It is an open question how close we can get to these complete sets in general, especially when mixing types. In this section we report some computational data for small orders. For simplicity, throughout this section we will refer to binary frequency squares of type $(n;\lambda_0,\lambda_1)$ as {\it squares of type} $\lambda_1$, provided the order is clear from context. By complementing if need be, we may assume that $\lambda_1\le n/2$.
We say that $k$-MOFS$(n)$ are a $k$-MOFS$(n;\Lambda)$ where $\Lambda \subseteq \{1,2,\dots,\lfloor n/2 \rfloor\}$ is the set of types of the $k$ frequency squares in the set. We stress that this notation means that every type in $\Lambda$ must occur within the set of MOFS, and no type outside of $\Lambda$ can occur.
Furthermore, we define $f(n;\Lambda)$ to be the maximum $k$ such that there exists a binary $k$-MOFS$(n;\Lambda)$. We will refer to the overall maximum (over all $\Lambda$) as $f(n)$. 

Understanding $f(n)$ in general seems to be a very difficult problem. However, a modest first step is to investigate small orders. For the main computations reported in this section, we used the following algorithm. The input was $n,\ \Lambda$ and $\F$, a MOFS$(n)$. The output was $\F\,'$, a maximum MOFS$(n)$ containing $\cal{F}$. Each element of $\F\,'\setminus\F$ had to be of a type in $\Lambda$. However, $\F\,'$ did not have to be a MOFS$(n,\Lambda)$, for two reasons. Firstly, the squares in the input $\F$ may have types that are not in $\Lambda$. Secondly, not every type in $\Lambda$ was required to be present in the output $\F\,'$.

The first task for our algorithm was to generate all possible \emph{mates}, that is, all frequency squares that were orthogonal to every square in $\F$ and were of some type in $\Lambda$.  When generating mates of type 3 and order 6, we only included a square or its complement, not both. This is because complementing does not change orthogonality. Next, the computer constructed a graph $\Gamma_{\F}$ with the mates as its vertices, and edges indicating orthogonality. After that, it searched $\Gamma_{\F}$ for a maximum clique. The union of $\F$ and the set of mates corresponding to this maximum clique yields $\F\,'$, the algorithm's output.

For very small orders we could use $\F=\emptyset$ as the input. For
slightly larger orders we needed to precompute a catalogue of all
possible inputs $\F$ up to isomorphism, where the types of the squares
in $\F$ were specified in advance. For isomorphism screening we used nauty
\cite{nauty}. 
Two sets of MOFS are \emph{isomorphic} if one can be obtained from the
other by some sequence of the following operations:
\begin{itemize}
  \item Applying the same permutation to the rows of all squares in the set.
  \item Applying the same permutation to the columns of all squares in the set.
  \item Transposing all squares in the set.
  \item Permuting the symbols in one of the squares.
  \item Permuting the squares within the set (in cases where we have imposed an
    order on the set).
\end{itemize}

The case $n=2$ is trivial since any single square is a complete set. For $n=3$ all squares have type $1$. It was a simple task to establish that
$f(3)=2$ and there are no complete sets of binary {\rm MOFS}$(3)$.
For $n=4$ it was shown in \cite{MOFS-2019} that all sets of type-maximal MOFS$(4;\{2\})$ are complete, so $f(4)=f(4;\{2\})=9$. We also considered squares of type $1$ and we found that $f(4;\{1\})=3$ and $f(4;\{1,2\})=7$.

For $n=5$ we have two possible types of squares; $1$ and $2$. We found that
$f(5)=8$ and there are no complete sets of binary {\rm MOFS}$(5)$.
There are 2160 squares of order 5 (120 of type $1$ and 2040 of type $2$) so testing for orthogonality amongst these squares was computationally easy. We first tested each type individually and found that $f(5;\{1\})=5$
and $f(5;\{2\})=8$. Mixing types did not result in any larger MOFS, in fact $f(5;\{1,2\})=8$. 

\begin{example}
The following is a maximal set of binary $8$-{\rm MOFS}$(5)$ containing two squares of type $1$ and six of type $2$:
\begin{equation*}
\left[ \begin{array}{ccccc}
00000000&00010110&00000001&01111000&10101111\\
00100001&00001010&00011100&10010011&01100100\\
00111010&11000000&00100110&00000101&00011001\\
10001100&00110101&01001011&00100010&00010000\\
01010111&00101001&10110000&00001100&00000010
\end{array} \right].
\end{equation*}
\end{example}

For the case $n=6$ the computations became more difficult. There are three types of squares in this case; $1$, $2$ and $3$. The authors of \cite{MOFS-2019} found that $f(6;\{3\})=17$ and there are 18 unique sets of 17-MOFS$(6;\{3\})$ up to isomorphism, all of which satisfy a full $(3,3)$-relation. Our programs confirmed that $f(6;\{1\})=10$. Together with our other values of $f(n;\{1\})$, this was already known from literature on EPAs \cite{HCD}. This was a simple computation since, up to isomorphism, there is only one square of type $1$. Here is an example of a type-maximal $10$-MOFS$(6;\{1\})$:
\begin{equation}\label{Max_Case0.1} \setlength{\arraycolsep}{4pt}
\left[ \begin{array}{cccccc}
0110000100&1001000000&0000001001&0000110000&0000000010&0000000000\\
0000011000&0100100011&0000000000&0000000000&0001000100	&1010000000\\
0000000010&0000001100&0010100000&1100000000&0000000000	&0001010001\\
0000000000&0010010000&0101000000&0000000101&1000101000&0000000010\\
1000000001&0000000000&0000000000&0011001010&0100010000&0000100100\\
0001100000&0000000000&1000010110&0000000000&0010000001&0100001000\end{array} \right]
\end{equation}

We can represent a set of MOFS even more compactly by converting the entries in the superimposed form from binary into decimal. From this point on, most of our MOFS will be represented this way. The decimal representation of (\ref{Max_Case0.1}) is 
\begin{equation*}\label{Max_Case0.1_dec} 
\left[ \begin{array}{cccccc}
388&576&9&48&2&0\\
24&291&0	&0&68&640\\
2&12&160&768&0&81\\
0&144&320&5&552&2\\
513&0&0&202&272&36\\
96&0	&534&0&129&264\end{array} \right].
\end{equation*}

Our next task was to establish that $f(6;\{2\})=14$. Up to isomorphism, there are four squares of type $2$, and there are 683 pairs of MOFS$(6;\{2\})$ which we used as input. Interestingly, only two of the 683 pairs could not be extended to a triple (that is, had empty $\Gamma_{\F}$), and are therefore type-maximal. They are the $2$-MOFS$(6;\{2\})$ from Example \ref{pseudo-rel}.

Many of the $14$-MOFS$(6;\{2\})$ that we found could be shown to be type-maximal using \Cref{cor-block-structure}. Below is one such type-maximal $14$-MOFS$(6;\{2\})$, along with its $\Z_3$-sum:
 \begin{equation*}
\left[ \begin{array}{cccccc}
14883&9309&2052&1456&842&4224\\
13516&10930&5889&2112&17&302\\
17&3844&12600&8258&7306&741\\
3424&4224&8198&9097&4724&3099\\
406&4419&2281&6684&9248&9728\\
520&40&1746&5159&10629&14672 \end{array} \right]
\hspace{1cm}
\left[\begin{array}{cccccc}
1&1&2&2&2&2\\
1&1&2&2&2&2\\
2&2&0&0&0&0\\
2&2&0&0&0&0\\
2&2&0&0&0&0\\
2&2&0&0&0&0\end{array} \right].
\end{equation*}
 
We now know that a complete MOFS$(6)$ cannot be achieved using squares of a single type. We failed to extend any of the 18 type-maximal sets of $17$-MOFS$(6;\{3\})$ using squares of type $2$. Note that since each $17$-MOFS$(6;\{3\})$ satisfies a non-constant full relation, \tref{thm-max-gen} implies that they cannot be extended by squares of type $1$ or $3$. However, to rule out the existence of larger MOFS$(6)$ of mixed type, we had to do further tests.

\begin{table}[ht]
\centering
\begin{tabular}{ |l|l|l|l|l|l| }
 \hline
 Case & type 1 & type 2 & type 3 & mates &max. \\ 
 \hline
 1&2+&0&0&(93,96)&10\\
 2&0&2+&0&(0,7969)&14\\
 3&0&0&2+&(5937,7413)&17\\
 \hline
 4&1+&1+&0&(4113,5264)&14\\ 
 5&1+&0&1+&(8307,8997)&15\\ 
 6&0&1&1+&(0,9696)&14\\
 7&0&2&1+&(0,6528)&15\\
 8&0&3+&0+&(2201,10788)&15\\
 9&1&1&0+&(8358,9602)&14\\
 10&1&2+&0+&(2206,6499)&14\\
 11&2+&1+&0+&(3257,3934)&13\\
 \hline
\end{tabular}
\caption{}
\label{cases}
\end{table}

To this end, we divided the computation into cases as shown in Table~\ref{cases}. The first three cases have already been discussed, and the last eight cases involve MOFS of mixed type. Each case describes the type of the MOFS $\F$ which were used as input into our algorithm, as well as the types in $\Lambda$ that were allowed when constructing mates. The entry $x$ in the column titled {\it type t} indicates that $\F$ must contain precisely $x$ squares of type $t$, and when a ``+" also appears next to an entry, it indicates this type is included in $\Lambda$. For example, in Case 10 we found all non-isomorphic $3$-MOFS$(6;\{1,2\})$ with precisely one square of type 1 and two squares of type 2, and input each of these into our algorithm, allowing for mates of type 2 and 3. In every case $\F$ is either a pair or a triple.

The fifth column of Table~\ref{cases} indicates the minimum and maximum number of mates (vertices in $\Gamma_{\F}$) found amongst all input pairs or triples. For Case 3 these bounds were stated in \cite{MOFS-2019}. The cases in which the lower bound was 0 are worth remarking on. The two pairs in Case 2 that could not be extended to triples are type-maximal and were discussed in Example \ref{pseudo-rel}. In Case 6 there were 2668 non-isomorphic pairs, 16 of which had no mates, three of which satisfied a non-constant full relation. In Case 7 there were 4\,408\,252 non-isomorphic triples, 13\,613 of which had no mates. Of those, there were 1471 that satisfied a non-constant relation, 424 of which satisfied a non-constant full relation. In all these instances, \tref{thm-max-gen} implies that the set cannot be extended by a square of type 3, which explains why $\Gamma_{\F}$ was empty. Note that \tref{thm-max-gen} only applies directly when a relation is full. However, if a relation is not full then there is a proper subset of the squares which satisfy a full relation.

Next we give an example of a type-maximal MOFS$(6)$ for each of the eight cases of mixed type, namely Case 4 to Case 11. The first example is a type-maximal $14$-MOFS$(6;\{1,2\})$ (Case 4), the second is a type-maximal $15$-MOFS$(6;\{1,3\})$ (Case 5), and so on. Note that although the $14$-MOFS$(6;\{2,3\})$ that we provide for Case 6 is type-maximal, there are other examples that are not. The simplest way to obtain one is to remove a square of type 2 from the $15$-MOFS$(6;\{2,3\})$ that we provide for Case 7.
{\small  \setlength{\arraycolsep}{3pt}
\begin{equation*}
\begin{array}{cc}
\left[ \begin{array}{cccccc}
12440&8260&3110&2721&1361&778\\
11650&6185&8818&1564&0&453\\
0&3776&13061&9291&2364&178\\
1825&151&0&6482&8936&11276\\
111&9520&2504&0&5766&10769\\
2644&778&1177&8612&10243&5216
\end{array} \right]&
\left[ \begin{array}{cccccc}
31487&18352&16396&5470&7617&2595\\
19829&30982&5805&20169&58&5074\\
3088&21057&12138&22198&16847&6589\\
710&3231&21875&8609&23160&24332\\
20904&7402&3028&19227&13845&17511\\
5899&893&22675&6244&20390&25816
\end{array} \right]\\
\\
\left[ \begin{array}{cccccc}
15358&13386&11553&2700&1873&183\\
13719&6625&10770&8829&3950&1160\\
1144&9894&4900&6227&11677&11211\\
3591&795&7388&14249&10466&8564\\
10281&3829&463&9542&12944&7994\\
960&10524&9979&3506&4143&15941
\end{array} \right]&
\left[ \begin{array}{cccccc}
31351&20874&17504&12269&3350&665\\
24451&17397&6238&19640&10090&8197\\
18008&7469&13232&26627&16623&4054\\
1215&10482&19277&5457&30340&19242\\
292&28188&3747&4814&18897&30075\\
10696&1603&26015&17206&6713&23780
\end{array} \right] \end{array}
\end{equation*}
\\
\begin{equation*}
\begin{array}{cc}
\left[ \begin{array}{cccccc}
30232&15780&16742&722&41&2049\\
22645&5065&10240&8232&18178&1206\\
11042&16448&5291&4400&24725&3660\\
224&8222&4612&23555&3448&25505\\
271&17440&9841&19116&14530&4368\\
1152&2611&18840&9541&4644&28778
\end{array} \right]&
\left[ \begin{array}{cccccc}
16382&8617&2072&1636&3283&775\\
8979&5173&10445&3626&1992&2550\\
234&9298&8097&10887&893&3356\\
1693&3012&9574&6491&10800&1195\\
2336&3915&639&9628&4230&12017\\
3141&2750&1938&497&11567&12872
\end{array} \right]\\
\\
\left[ \begin{array}{cccccc}
7196&15106&0&57&230&1985\\
6369&0&3618&5508&283&8796\\
8626&2252&3409&4615&5672&0\\
0&4721&940&2378&11269&5266\\
1610&1332&12425&2704&4416&2087\\
773&1163&4182&9312&2704&6440
\end{array} \right]&
\left[ \begin{array}{cccccc}
7225&2724&23&234&976&321\\
2241&1170&2556&4352&635&549\\
925&2145&3906&179&4212&136\\
208&369&4768&1613&2183&2362\\
294&536&233&2132&1441&6867\\
610&4559&17&2993&2056&1268
\end{array} \right] \end{array}
\end{equation*}
}
Cases $4$ and $5$ directly imply $f(6;\{1,2\})=14$ and $f(6;\{1,3\})=15$, respectively. Cases $6$, $7$ and $8$ together imply that $f(6;\{2,3\})=15$ and cases $9$, $10$ and $11$ together imply that $f(6;\{1,2,3\})=14$. Hence the overall maximum is $17$ and is only achieved by balanced squares.

\begin{theorem}
$f(6)=17$ and there are no complete sets of binary {\rm MOFS}$(6)$.
\end{theorem}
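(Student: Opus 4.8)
The plan is to assemble the exhaustive computations summarised in this section. First I would note that, replacing any square by its complement if necessary, every binary frequency square of order $6$ has a type $(6;\lambda_0,\lambda_1)$ with $\lambda_1\in\{1,2,3\}$. Hence every set of binary MOFS$(6)$ is a $k$-MOFS$(6;\Lambda)$ for exactly one nonempty $\Lambda\subseteq\{1,2,3\}$, and therefore
\[
f(6)=\max\big\{\,f(6;\Lambda):\emptyset\neq\Lambda\subseteq\{1,2,3\}\,\big\}.
\]

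Next I would read off the seven values of $f(6;\Lambda)$ from the case analysis. The single-type cases give $f(6;\{1\})=10$, $f(6;\{2\})=14$ and $f(6;\{3\})=17$ (the last from \cite{MOFS-2019}). For the mixed types, Case~4 gives $f(6;\{1,2\})=14$; Case~5 gives $f(6;\{1,3\})=15$; Cases~6--8 together give $f(6;\{2,3\})=15$; and Cases~9--11 together give $f(6;\{1,2,3\})=14$. In each instance the lower bound is realised by an explicit set of MOFS exhibited above, while the upper bound holds because every maximum $k$-MOFS$(6;\Lambda)$ contains, up to isomorphism, one of the catalogued input configurations $\F$ for the relevant case, so its cardinality is at most $|\F|$ plus the size of a maximum clique in $\Gamma_{\F}$, which the algorithm computes. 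Taking the maximum of these seven values gives $f(6)=17$, and it is achieved only when $\Lambda=\{3\}$.

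Finally, a complete set of binary MOFS$(6)$ would have cardinality $\sum_{i=1}^k(m_i-1)=k=(6-1)^2=25$; equivalently, it would meet the bound of Corollary~\ref{cor-gen-bound} with $m=2$. Since $25>17=f(6)$, no such set exists, which completes the proof.

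The genuine content here lies entirely in the computer search behind Table~\ref{cases}, not in this final bookkeeping. The main obstacle is to be confident that, for each case, the precomputed catalogue of non-isomorphic input pairs and triples is complete (the isomorphism screening is handled with nauty \cite{nauty}) and that every maximum-clique computation in the orthogonality graphs $\Gamma_{\F}$ is carried out correctly. Granting those computations, the deduction of $f(6)=17$ and the nonexistence of complete sets is routine.
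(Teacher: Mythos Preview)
Your proposal is correct and follows essentially the same approach as the paper: the theorem is a direct summary of the exhaustive case analysis in Table~\ref{cases}, and the paper's own ``proof'' is simply the sentence preceding the theorem, which records that Cases~4--11 yield $f(6;\{1,2\})=14$, $f(6;\{1,3\})=15$, $f(6;\{2,3\})=15$, $f(6;\{1,2,3\})=14$, so the overall maximum is $17$, attained only by $\Lambda=\{3\}$. Your explicit observation that a complete set would require $25>17$ squares, and your acknowledgement that the substance lies in the correctness of the catalogues and the clique searches, match the paper's treatment exactly.
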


For $n\le6$ we note that $f(n)=f(n;\{\lfloor n/2 \rfloor\})$. It would interesting to establish whether this holds more generally. However, note that $f(5)$ is shared by $f(5,\{2\})$ and $f(5,\{1,2\})$. 

  \let\oldthebibliography=\thebibliography
  \let\endoldthebibliography=\endthebibliography
  \renewenvironment{thebibliography}[1]{%
    \begin{oldthebibliography}{#1}%
      \setlength{\parskip}{0.4ex plus 0.1ex minus 0.1ex}%
      \setlength{\itemsep}{0.4ex plus 0.1ex minus 0.1ex}%
  }%
  {%
    \end{oldthebibliography}%
  }

\end{document}